\newtheorem{introthm}{Theorem}
\newtheorem{theorem}{Theorem}[section]
\newtheorem{lemma}[theorem]{Lemma}
\theoremstyle{definition}
\theoremstyle{remark}
\newcommand{\gen}[1]{\langle#1\rangle}
\begin{document}
\title[Conjugacy class graph]{Triangle-free cyclic conjugacy class graph of a finite group}

\author[Lewis]{Mark L. Lewis}
\address{Department of Mathematical Sciences, Kent State University, Kent, OH  44242, U.S.A.}
\email{lewis@math.kent.edu}

\author[Mohammadian]{Abbas Mohammadian}
\address{Department of Pure Mathematics, Ferdowsi University of Mashhad, \linebreak \\ P.O.Box 1159-91775, Mashhad, Iran.\\}
\email{abbasmohammadian1248@gmail.com}

\subjclass[2010]{Primary 20E45; Secondary 05C25.}
\keywords{Triangle-free, conjugacy classes, enhanced power graph.}

\begin{abstract}
We generalize the enhanced power graph by replacing elements with conjugacy classes.  The main result of this paper is to determine when this graph is triangle-free.
\end{abstract}

\maketitle


\section{Introduction}

Throughout this paper, all groups are finite.  One recent trend in group is to study the interaction between various graphs and groups.  The excellent expository paper by Cameron \cite{camex} outlines much of the work in this area and we highly recommend that the reader consult this work.

Perhaps the graph that has received the most attention is the commuting graph.  Given a group $G$, its {\it commuting graph} is the graph with vertices $G \setminus \{ 1 \}$ and there is an edge between $x, y \in G \setminus \{ 1 \}$ when $xy = yx$.  Herzog, Longobardi and Maj proposed a modification to this graph in \cite{mh-pl-mm} by defining the \textit{commuting conjugacy class graph} (or CCC-graph) of a group $G$ as the graph with vertex set, the set of nontrivial conjugacy classes of $G$, and two distinct vertices $x^{G}$ and $y^{G}$ are adjacent when $\langle x^{\prime },y^{\prime}\rangle$ is abelian for some $x^{\prime }\in x^{G}$ and $y^{\prime } \in y^{G}$.  In the paper \cite{mh-pl-mm}, they consider the connectivity and the diameter of the corresponding connected components of the CCC-graph of $G$.  In the paper \cite{MEFW}, the second author with others characterize the groups whose CCC-graphs are triangle-free, and that paper serves as motivation for this paper. 

In this paper, we make a similar modification to a graph that was originally studied under the name cyclic graph (see \cite{imper} and \cite{imle}), but in the literature it is most often called the enhanced power graph (see \cite{AACNS}, \cite{BeBhu}, \cite{BeKiMu}, and \cite{BeDe}).  There is an expository paper on enhanced power graphs with much of the known results at \cite{enhex}.  We define the {\it enhanced power graph} (or cyclic graph) of $G$ to be the graph whose vertex set is $G \setminus \{ 1 \}$ with an edge between $x,y \in G \setminus \{ 1 \}$ when $\langle x, y \rangle$ is cyclic.  Following the idea of the CCC-graph, define the {\it cylic conjugacy class graph} of $G$ to be the graph with vertex set, the set of nontrivial conjugacy classes of $G$, so that there is an edge between distinct $x^G$ and $y^G$ when there exist $x' \in x^G$ and $y' \in y^G$ so that $\langle x', y' \rangle$ is cyclic.  We will use $\Delta (G)$ to denote the CCC-graph of $G$.  

This graph is studied in \cite{MGEL}.  In that paper, we consider the connectivity and diameter of this graph.  We determine the universal vertices of this graph and when this graph is a complete graph.  We also determine when this graph is an empty graph.   


The aim of this paper is to classify all finite groups $G$ with a triangle-free cyclic conjugacy class graph. We first have the case where $|G|$ is odd, and we prove:

\begin{introthm}\label{intro-|G|=odd} 
If $G$ is a group of odd order and $\Delta (G)$ is triangle-free then either (1) $G$ is a $3$-group of exponent $3$ or (2) $G$ is a Frobenius group of order $3 \cdot 7^a$ for a positive integer $a$ where the Frobenius kernel $N$ has exponent $7$.
\end{introthm}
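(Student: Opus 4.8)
The plan is to reduce first to the case that every nontrivial element of $G$ has prime order, and then to translate triangle-freeness into a purely numerical statement about how conjugation realizes automorphisms of prime-order cyclic subgroups.

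I would start by recording the basic source of triangles: if $g$ has order $n$, then any two of the powers $g,g^2,\dots,g^{n-1}$ generate a cyclic subgroup of $\gen g$, so the distinct classes among the $(g^i)^G$ are pairwise adjacent in $\Delta(G)$; hence triangle-freeness forces $\gen g\setminus\{1\}$ to meet at most two conjugacy classes. Taking $g$ of order $pq$ rules out such elements (the powers $g,g^p,g^q$ have three different orders, hence lie in three different classes), and taking $g$ of order $p^2$ rules those out as well: the $p(p-1)$ generators of $\gen g$ split into $p(p-1)/m$ classes, where $m=|N_G(\gen g)/C_G(\gen g)|$ is odd (a section of the odd-order group $G$) acting freely by multiplication on the units; this count has the same $2$-part as $p-1$, so it is even and at least $2$, and together with the class of $g^p$ this already gives three mutually adjacent classes. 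Thus every nontrivial element of $G$ has prime order.

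Once every element has prime order I would prove the key reformulation: two distinct classes $x^G,y^G$ are adjacent iff $\gen{x',y'}$ is cyclic for some conjugates $x',y'$, and since both elements have prime order this forces $\gen{x'}=\gen{y'}$ of a common prime order $p$. Hence $\Delta(G)$ is a disjoint union of cliques, one for each conjugacy class of prime-order cyclic subgroups $\gen z$, the clique of $\gen z$ (with $|z|=p$) having exactly $(p-1)/|N_G(\gen z)/C_G(\gen z)|$ vertices; triangle-freeness is equivalent to all cliques having size at most $2$. Because $N_G(\gen z)/C_G(\gen z)$ embeds in $\Aut(\gen z)\cong\Z/(p-1)$ and has odd order, its order divides the odd part of $p-1$, so the clique size is even and at least $2$; equality forces the $2$-part of $p-1$ to be exactly $2$, i.e. $p\equiv 3\pmod 4$, and $|N_G(\gen z)/C_G(\gen z)|=(p-1)/2$ for every prime-order $z$.

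If $G$ is a $p$-group it then has exponent $p$, and $N_G(\gen z)/C_G(\gen z)$ is at once a section of a $p$-group and equal to $(p-1)/2$, a number coprime to $p$; so it is trivial and $p-1\le 2$, giving $p=3$ and conclusion (1). If $G$ is not a $p$-group, I would invoke the structure of odd-order groups in which every element has prime order: such a group is Frobenius, $G=K\rtimes C$, with $K$ a $p$-group of exponent $p$ and $C$ cyclic of prime order $q$. Applying the clique criterion to a generator $c$ of $C$: a Frobenius complement satisfies $N_G(C)=C_G(c)=C$ (using $C_K(c)=1$), so $N_G(\gen c)/C_G(\gen c)=1$ and its clique has the full size $q-1$, forcing $q=3$. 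Finally, for $z\in K$ of order $p$ the order $(p-1)/2=|N_G(\gen z)/C_G(\gen z)|$ is coprime to $p$ and divides $|G|=3p^a$, hence divides $3$; thus $(p-1)/2\in\{1,3\}$, so $p\in\{3,7\}$, and $p\ne 3$ gives $p=7$. This yields $|G|=3\cdot 7^a$ with $K$ of exponent $7$, i.e. conclusion (2). The main obstacle is precisely this structural input — showing that a non-$p$-group of odd order with every element of prime order must be Frobenius with a prime-order complement and a kernel of prime exponent; establishing it from scratch requires proving that the Sylow subgroups form a trivial-intersection partition (via the fact that $C_G(x)$ is a $p$-group for every $p$-element $x$, which follows since elements of coprime prime orders cannot commute) together with the fact that an odd-order Frobenius complement in which every element has prime order is cyclic of prime order. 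Everything after that is the short arithmetic above.
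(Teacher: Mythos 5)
Your proposal is correct, and while it rests on the same two pillars as the paper's proof --- the classification in \cite{CDLW} of groups all of whose elements have prime order, and counting orbits of $N_G(\langle z\rangle)/C_G(\langle z\rangle)$ on $\langle z\rangle\setminus\{1\}$ (the paper's Lemma \ref{Element orders}) --- the way you run the counting is genuinely different and tidier. Where the paper eliminates elements of order $p^2$ with the single triangle $x^G$, $(x^{-1})^G$, $(x^p)^G$ (using that odd-order groups have no nontrivial real elements), you get the same conclusion from a parity count: the generators of $\langle x\rangle$ fall into $p(p-1)/m$ classes with $m$ odd, an even number, hence at least two classes besides $(x^p)^G$. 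Your main structural novelty is the observation that once all elements have prime order, $\Delta(G)$ is a disjoint union of cliques indexed by conjugacy classes of prime-order cyclic subgroups, with clique size exactly $(p-1)/|N_G(\langle z\rangle)/C_G(\langle z\rangle)|$; since that index is odd, the size is even, so triangle-freeness pins every clique to size exactly $2$ and forces $|N_G(\langle z\rangle)/C_G(\langle z\rangle)|=(p-1)/2$ with $p\equiv 3\pmod 4$. This single numerical criterion then replaces two separate arguments in the paper: to get $p=7$ the paper chooses $z\in Z(N)$ and runs a dichotomy ($H\le N_G(\langle z\rangle)$, or $H\cap N_G(\langle z\rangle)=1$ forcing $p-1$ orbits) to reach $(p-1)/3\le 2$, whereas you simply observe that $(p-1)/2$ is coprime to $p$ and divides $|G|=3p^a$, hence divides $3$, giving $p=7$ at once; the same clique formula also handles the complement ($q-1\le 2$, so $q=3$) and the $p$-group case uniformly, and it yields for free, at every $1\ne z\in N$, the normalizer condition recorded in the paper's full characterization (Theorem \ref{|G|=odd}(2)), which is stronger than what the introductory statement asks. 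One point worth making explicit if you write this up: the kernel being a $p$-group of exponent $p$ follows immediately from nilpotence of Frobenius kernels together with prime element orders, so you need not extract it from the classification --- which is just as well, since, as you correctly flag, reproving the \cite{CDLW} structure theorem from scratch is the only step that is not short, and the paper cites it at exactly the same point.
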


When $|G|$ is even, we consider the case that $G$ is a $2$-group.

\begin{introthm}\label{intro-Characterize 2-group}
If $G$ is a $2$-group and $\Delta (G)$ is triangle-free, then $G$ has exponent at most $4$ and every element of order $4$ is conjugate to its inverse.
\end{introthm}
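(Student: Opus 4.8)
The plan is to exploit one simple but decisive observation: if two elements $x,y$ of $G$ lie in a common cyclic subgroup $C$, then $\langle x,y\rangle \le C$ is itself cyclic, so whenever $x^G$ and $y^G$ are \emph{distinct} vertices they are automatically adjacent in $\Delta(G)$. Consequently, to produce a triangle it suffices to exhibit a single cyclic subgroup of $G$ meeting three pairwise-distinct conjugacy classes. Since conjugate elements always share the same order, any cyclic subgroup containing elements of three distinct orders already yields such a triangle. I would isolate this remark first, since both conclusions of the theorem follow from it by contraposition.

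For the exponent bound, suppose $G$ has exponent greater than $4$; being a $2$-group, it then contains an element $g$ of order $8$ (take a suitable power of any element of larger order). The cyclic group $\langle g\rangle$ contains $g$, $g^2$, $g^4$ of orders $8,4,2$, hence three distinct conjugacy classes $g^G$, $(g^2)^G$, $(g^4)^G$ that are pairwise adjacent by the observation above. This triangle contradicts triangle-freeness, so the exponent must be at most $4$.

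For the inversion property I would again argue by contraposition: suppose some $g\in G$ of order $4$ is not conjugate to $g^{-1}=g^3$. Then $g^G \ne (g^3)^G$, while both differ from $(g^2)^G$ because $g^2$ has order $2$. Thus $g^G$, $(g^2)^G$, $(g^3)^G$ are three distinct vertices, all lying in the cyclic group $\langle g\rangle$ and therefore pairwise adjacent --- again a triangle. Hence triangle-freeness forces every element of order $4$ to be conjugate to its inverse.

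Because adjacency of elements inside a common cyclic subgroup is essentially automatic, I do not expect a serious analytic obstacle here; the only point requiring genuine care is the bookkeeping that guarantees the three chosen classes are really distinct. This rests on the two facts that conjugate elements share an order and that the failure of $g\sim g^{-1}$ is \emph{exactly} the statement $g^G\ne(g^3)^G$, so I anticipate that verifying distinctness of the vertices, rather than any adjacency computation, will be the step most in need of explicit justification.
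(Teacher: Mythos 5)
Your proposal is correct and matches the paper's own argument: the paper likewise produces the triangles $x^G$, $(x^2)^G$, $(x^4)^G$ (for an element of order exceeding $4$) and $x^G$, $(x^2)^G$, $(x^{-1})^G$ (for an element of order $4$ not conjugate to its inverse), with the same implicit use of the fact that distinct classes meeting a common cyclic subgroup are adjacent. Your explicit bookkeeping about distinctness of the three vertices is the only point the paper leaves tacit, and it is handled correctly.
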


We next study the case that $G$ is solvable, $2$ divides $|G|$, and $Z(G) = 1$.

\begin{introthm} \label{intro-Solvable} 
If $G$ is a solvable group with $|G|$ even, $Z (G) = 1$, and $\Delta (G)$ is triangle-free, then either (1) $G$ is a Frobenius group where the Frobenius kernel is an elementary abelian $3$-group or $5$-group and a Frobenius complement is $Z_2$ or $Q_8$, (2) $G$ is a Frobenius group where the Frobenius kernel has exponent $4$ and every element of order $4$ is conjugate to its inverse and a Frobenius complement is $Z_3$, or (3) $G$ is a $2$-Frobenius group that is a $\{ 2, 3 \}$-group, a Sylow $3$-subgroup has order $3$, a Sylow $2$-subgroup has exponent $4$ and every element of order $4$ is conjugate in $G$ to its inverse, and if $N$ is the Fitting subgroup of $G$, then $G/N \cong S_3$.
\end{introthm}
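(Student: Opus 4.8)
The plan is to begin with one clean reduction that controls the \emph{element orders} in $G$. If some $x \in G$ has order divisible by two distinct primes, say $|x| = p^a m$ with $p \nmid m$ and $m > 1$, then $x$, $x^{p^a}$, and $x^{m}$ all lie in the cyclic group $\langle x \rangle$, have three pairwise distinct orders, and hence represent three distinct conjugacy classes that are pairwise adjacent in $\Delta(G)$, a triangle. Thus triangle-freeness forces every element of $G$ to have prime power order. I would then invoke the classical structure theory of solvable groups all of whose elements have prime power order (Higman): such a group is a $p$-group, a Frobenius group, or a $2$-Frobenius group, and at most two primes divide its order. Since $Z(G) = 1$ excludes the nilpotent (in particular $p$-group) case, $G$ is a $\{p,q\}$-group that is Frobenius or $2$-Frobenius, and $|G|$ even forces $2 \in \{p,q\}$.

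Next I would treat the \textbf{Frobenius case}, writing $G = N \rtimes H$ with kernel $N$ and complement $H$. The crucial point is that \emph{no edge of $\Delta(G)$ joins a kernel class to a complement class}: every conjugate $h'$ of a nontrivial complement element lies in a conjugate complement and so acts fixed-point-freely on $N$, giving $C_N(h') = 1$; hence $h'$ cannot commute with, let alone lie in a cyclic group together with, any nontrivial element of $N$. Consequently $\Delta(G)$ is triangle-free if and only if the subgraphs induced on the kernel classes and on the complement classes are \emph{each} triangle-free. For the complement I would use $N_G(H) = H$, so two elements of $H$ are $G$-conjugate precisely when they are $H$-conjugate; when $H$ is cyclic there is no fusion at all, the complement subgraph is the complete graph on $|H| - 1$ vertices, and triangle-freeness forces $|H| \le 3$. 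Combining this with the fact that a $2$-group Frobenius complement is cyclic or generalized quaternion, and checking that $Q_8$ yields only the triangle-free star $K_{1,3}$ on its four nontrivial classes while $Z_4$ already produces a triangle, pins the complement to $Z_2$ or $Q_8$ (when $H$ has even order) or to $Z_3$ (when $N$ has even order).

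It then remains to analyse the kernel $N$, a $p$-group, through the cyclic subgroups it contains. When $H$ has even order, $p$ is odd and the central involution acts fixed-point-freely, hence inverts $N$ and forces it abelian; each cyclic subgroup $\langle a \rangle \le N$ then contributes pairwise-adjacent classes, and counting them rules out order-$p^2$ elements (which give a triangle) and caps the prime at $p \le 5$, so $N$ is elementary abelian of exponent $3$ or $5$: this is conclusion~(1). When $N$ has even order it is a $2$-group, and Theorem~\ref{intro-Characterize 2-group} supplies directly that $N$ has exponent at most $4$ with every order-$4$ element conjugate to its inverse, while the complement is $Z_3$ by the previous step: this is conclusion~(2). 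Throughout I would lean on Theorem~\ref{intro-|G|=odd} and Theorem~\ref{intro-Characterize 2-group} to furnish the internal structure of the odd- and even-order factors rather than re-deriving it.

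Finally, in the \textbf{$2$-Frobenius case} I would use the normal series $1 < F < M < G$ with $F = F(G)$, where $M$ is Frobenius with kernel $F$ and $G/F$ is Frobenius with kernel $M/F$. Since quotients of prime-power-order-element groups are again of this type, $G/F$ is a Frobenius $\{2,3\}$-group; analysing its $3$-elements and $2$-elements inside $G$ by the same per-cyclic-subgroup class count forces a Sylow $3$-subgroup to have order $3$ and the top complement to have order $2$, so that $G/F \cong Z_3 \rtimes Z_2 \cong S_3$, with $F$ a $2$-group. Applying Theorem~\ref{intro-Characterize 2-group} to a Sylow $2$-subgroup then gives exponent $4$ with order-$4$ elements conjugate in $G$ to their inverses, yielding conclusion~(3). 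The \textbf{main obstacle} I anticipate is exactly the bookkeeping that separates these three outputs: verifying rigorously that kernel--complement edges never occur (so the triangle count splits cleanly), controlling the $G$-fusion and the distribution of conjugates across the lines of an elementary abelian kernel so that the per-line adjacencies remain triangle-free precisely for $p \in \{3,5\}$, and ensuring that in the $2$-Frobenius case the quotient $G/F$ genuinely collapses to $S_3$ rather than a larger Frobenius group.
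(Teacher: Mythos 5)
Your overall route coincides with the paper's: force prime-power (indeed prime or prime-squared) element orders via triangles inside $\langle x\rangle$, invoke the classification of solvable EPPO groups (\cite{rb}, Higman) to reduce to Frobenius or $2$-Frobenius $\{2,p\}$-groups, and then bound the prime via per-cyclic-subgroup class counts. Your observation that no edge of $\Delta(G)$ joins a kernel class to a complement class (so any triangle lives entirely on one side) is a clean packaging of what the paper uses only implicitly in its converse direction. However, two steps of your plan are genuinely gapped. First, you cannot apply Theorem \ref{intro-Characterize 2-group} to the Frobenius kernel $N$, nor later to a Sylow $2$-subgroup: that theorem's hypothesis is that $\Delta(N)$ itself is triangle-free, and triangle-freeness does \emph{not} pass from $\Delta(G)$ to a subgroup, because three distinct pairwise-adjacent $N$-classes may fuse into only two $G$-classes, so a triangle of $\Delta(N)$ need not survive in $\Delta(G)$ (the same objection applies to your plan to lean on Theorem \ref{intro-|G|=odd} for odd kernels). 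The paper instead argues directly in $G$: an $x\in N$ of order $8$ gives the triangle $\{x^G,(x^2)^G,(x^4)^G\}$, so the exponent is at most $4$; an $x$ of order $4$ with $x^G\neq (x^{-1})^G$ gives $\{x^G,(x^2)^G,(x^{-1})^G\}$; and to place the conjugation inside $N$ (as the body theorem's conclusion requires) it uses a $2$-part trick: if $x^g=x^{-1}$ then $x^{g^2}=x$, so the odd part of $g$, being a power of $g^2$, centralizes $x$, whence the $2$-part of $g$ still inverts $x$ and lies in $N$ since $N$ is the normal Sylow $2$-subgroup.

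Second, in the $2$-Frobenius case you assert without argument that the lower kernel $F$ is a $2$-group, i.e., that the tower has primes $2$--$3$--$2$ rather than $p$--$2$--$p$. This is precisely what the paper's case division buys: if $G$ has a nontrivial normal subgroup of odd order, then since $G$ is EPPO of even order it is CIT (a nontrivial odd-order element commuting with an involution would produce an element of order $2m$), and Suzuki's theorem \cite{ms} forces $G$ to be an honest Frobenius group with odd kernel and $2$-group complement, so the $2$-Frobenius configuration can only occur with $F=O_2(G)$. If you want to stay inside your Higman framework, you must rule out odd $F$ directly, e.g.: then $M/F$ would be a $2$-group that is simultaneously a Frobenius complement (hence cyclic or generalized quaternion) and a Frobenius kernel admitting a fixed-point-free automorphism of odd order, which is impossible since $\Aut$ of a cyclic $2$-group is a $2$-group and every automorphism of a generalized quaternion group fixes its unique involution. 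A smaller point: your inversion-only cap ``$p\le 5$'' in the $Q_8$ case is not forced as stated, because when $p\equiv 1\ ({\rm mod~}4)$ a line stabilizer can be $Z_4$ and fusion cuts the per-line class count to $(p-1)/4$, which only excludes $p\ge 13$; the paper's finer count, split by $p$ modulo $4$, yields $p=5$ or $p=3$ respectively. You flagged this bookkeeping as your anticipated obstacle, and it does not change your final list, but as written that step is an assertion rather than a proof.
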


Finally, we are left with the case that $G$ is nonsolvable.

\begin{introthm} \label{intro-Non-soluble} 
If $G$ be a centerless non-solvable group and $\Delta (G)$ is triangle-free, then $G$ is isomorphic to either (1) $PSL(2,q)$ where $q \in \{4, 7, 9 \}$, (2) $PSL(3,4)$, or (3) $G/N \cong {\rm PSL} (2,4)$ and $N$ an elementary $2$-group that is isomorphic to copies of the natural module for $ {\rm PSL} (2,4)$. 
\end{introthm}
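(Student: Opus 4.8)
The plan is to convert the triangle-free hypothesis into a statement about conjugacy classes meeting cyclic subgroups, use the classification of finite simple groups to produce a short list of candidate composition factors, verify each candidate by hand, and then analyse the possible normal extensions. Since $G$ is nonsolvable, $|G|$ is even by the Feit--Thompson theorem, so there is no overlap with Theorem~\ref{intro-|G|=odd}.

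The cornerstone is the following \emph{cyclic subgroup principle}: if a cyclic subgroup $C \leq G$ contains elements $a,b,c$ lying in three distinct nontrivial classes, then $\gen{a,b}, \gen{b,c}, \gen{a,c}$ are all cyclic (being subgroups of $C$), so $a^G, b^G, c^G$ is a triangle in $\Delta(G)$. Hence triangle-freeness forces \emph{every} cyclic subgroup of $G$ to meet at most two nontrivial $G$-classes. Three consequences drive the whole argument. First, since elements of distinct orders are never conjugate, $\gen{g}$ meets at least $\tau(|g|)-1$ classes, one per nontrivial divisor, so $\tau(|g|)\leq 3$ and every element order lies in $\{1\}\cup\{p:p\text{ prime}\}\cup\{p^2:p\text{ prime}\}$. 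Second, for $\gen{g}$ of prime order $p$ the $p-1$ generators must fall into at most two classes. Third, if $|g|=p^2$ then all elements of order $p$ in $\gen{g}$ are $G$-conjugate and all elements of order $p^2$ in $\gen{g}$ are $G$-conjugate. I would record these as a lemma before anything else.

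Next I would invoke the classification. Let $S$ be a nonabelian composition factor of $G$; its element orders also lie in $\{1,p,p^2\}$, so $S$ lies in the known short list of simple groups whose element orders are each prime or the square of a prime. A product of two such factors would contain an element of composite order $p q$ (or $p^3$), violating the spectrum bound, so in fact the socle of $\ol{G}:=G/R(G)$ (with $R(G)$ the solvable radical) is a single copy of $S$ and $S \trianglelefteq \ol{G}\leq \Aut(S)$; the proper outer automorphisms one might adjoin (e.g. passing to $PGL$, $S_6$, or a field extension) reintroduce forbidden orders, forcing $\ol{G}\cong S$. For each surviving $S$ I would then read the power map and class fusion from the character table and apply the refined conditions above. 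This is precisely what distinguishes the admissible cases from the inadmissible ones: in $PSL(2,8)$ the cyclic torus of order $9$ meets three classes and yields a triangle, whereas in $PSL(2,7)$, $PSL(2,9)\cong A_6$, $PSL(2,4)\cong A_5$ and $PSL(3,4)$ the generators of each cyclic $p$- and $p^2$-subgroup fuse into at most two classes (the order-$4$ elements being conjugate to their inverses). This step establishes cases~(1) and~(2) when $G$ is simple.

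Finally I would treat the non-simple case, which I expect to be the main obstacle. Here $R:=R(G)\neq 1$, $\ol{G}\cong S$, and the goal is to show this is possible only when $S\cong PSL(2,4)$ and $R$ is an elementary abelian $2$-group that is a direct sum of copies of the natural $SL(2,4)$-module. I would analyse the coprime and non-coprime actions of (preimages of) $S$ on the Sylow subgroups of $R$: any odd prime $r\mid |R|$ would, together with suitable elements of $S$, produce a cyclic subgroup meeting three classes, so $R$ is a $2$-group; the spectrum bound then forces $R$ to have exponent $2$, and Theorem~\ref{intro-Characterize 2-group} constrains the resulting Sylow $2$-structure and forces every order-$4$ element created in the extension to be conjugate to its inverse. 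The surviving simple groups $PSL(2,7)$, $PSL(2,9)$, and $PSL(3,4)$ each admit an involution whose lift over a nontrivial $2$-module acquires powers in three distinct classes, eliminating their extensions; for $S\cong A_5$ the natural module is singled out as the unique nontrivial irreducible $\mathbb{F}_2S$-module whose involutions fix a subspace small enough to avoid a forbidden cyclic configuration, and decomposing $R$ into composition factors shows each must be natural. Assembling these facts gives case~(3) and completes the classification.
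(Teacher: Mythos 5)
Your simple-group half is essentially the paper's argument: restrict the spectrum to $\{1, p, p^2\}$ and cyclic subgroups to at most two nontrivial classes (the paper's Lemma \ref{Element orders}), obtain the candidate list ${\rm PSL}(2,q)$ for $q \in \{4,7,8,9,17\}$, ${\rm PSL}(3,4)$, ${\rm Sz}(8)$, ${\rm Sz}(32)$, ${\rm M}_{10}$, and eliminate by element orders (${\rm PSL}(2,17)$, ${\rm M}_{10}$ have order-$8$ elements) or by an order-$p^2$ element not conjugate to its inverse (${\rm PSL}(2,8)$, ${\rm Sz}(8)$, ${\rm Sz}(32)$ --- note ${\rm Sz}(8)$, with spectrum $\{1,2,4,5,7,13\}$, survives the spectrum test, so your fusion check is genuinely needed there). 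The structural difference is that the paper does not run a CFSG analysis at all: it cites Brandl \cite{rb}, whose classification of groups all of whose elements have prime power order already delivers, in the nonsolvable case, both the simple list \emph{and} the full structure of the non-simple case --- $N$ an elementary abelian $2$-group that is a sum of copies of the natural module, with $G/N$ restricted to ${\rm PSL}(2,4)$, ${\rm PSL}(2,8)$, ${\rm Sz}(8)$, ${\rm Sz}(32)$. You propose to reprove that theorem, and this is where your sketch breaks.

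Two steps in your non-simple case are wrong as stated. First, ``the spectrum bound then forces $R$ to have exponent $2$'' is false: $4 = 2^2$ is a permitted element order, and triangle-freeness only forces elements of order $4$ to be conjugate to their inverses (exactly as in Theorem \ref{intro-Characterize 2-group}); elementary abelianness of $N$ cannot be read off the spectrum and in \cite{rb} it comes from a module analysis. The mechanism you actually need, and never state, is the fixed-point argument: if $g$ has odd prime order $r$ and centralizes some $1 \ne x \in N$, then $gx$ has composite order $2r$, so every odd-order element must act fixed-point-freely on $N$. That is what forces $R$ to be a $2$-group, pins down the admissible modules, and eliminates the extensions over ${\rm PSL}(2,7)$, ${\rm PSL}(2,9)$, ${\rm PSL}(3,4)$. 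Second, your substitute eliminations do not work: a lift of an involution over a $2$-module has order at most $4$, its nontrivial powers are $x$, $x^2$, $x^{-1}$, and this yields a triangle only if $x \not\sim x^{-1}$, which you have not shown; and ``involutions fix a subspace small enough'' is the wrong criterion, since an involution acting on any nontrivial module over $\mathbb{F}_2$ always has nonzero fixed points --- the relevant fixed-point-freeness condition is on elements of \emph{odd} order (this is also what singles out the natural module for $A_5$ over, say, the heart of the $5$-point permutation module, where elements of order $3$ have fixed vectors). Your reduction to $\overline{G} \cong S$ with a single simple socle and no outer part is likewise asserted rather than proved. In short: either cite Brandl's theorem as the paper does, or supply the fixed-point-free-action analysis; as written, case (3) is not established.
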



\section{Results}

We begin with a technical lemma that shows that if $G$ has a CCC graph $\Delta (G)$ which is triangle-free, then there are strong restrictions on the elements of $G$.  We will see that the observation that every nonidentity element has order that either a prime or the square of a prime is key to classifying groups where $\Delta (G)$ is triangle-free.

\begin{lemma} \label{Element orders}
Suppose $G$ is a group such that $\Delta (G)$ is triangle free.  For any element $1 \ne x \in G$, the number of distinct conjugacy classes of $G$ intersecting $\left\langle x\right\rangle -\{1\}$ is at most two and $N_{G} (\left\langle x \right\rangle )/C_{G} (\left\langle x \right\rangle )$ can have at most two orbits on its action on $\left\langle x\right\rangle -\{1\}$.  In particular, the order of every nontrivial element of $G$ is either prime or a square of a prime.
\end{lemma}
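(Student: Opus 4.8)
The plan is to work with a single cyclic subgroup $\langle x\rangle$ and translate the triangle-free hypothesis into a counting restriction on how $G$ acts on it. The key observation is that any two elements generating the same cyclic group are automatically adjacent in $\Delta(G)$ (since their join is cyclic), so if three distinct conjugacy classes all met $\langle x\rangle - \{1\}$, we would have three pairwise-adjacent vertices, a triangle. I would first show the two claimed bounds are essentially the same statement. The group $N_G(\langle x\rangle)/C_G(\langle x\rangle)$ embeds in $\Aut(\langle x\rangle)$ and permutes $\langle x\rangle - \{1\}$; two elements of $\langle x\rangle$ lie in the same $G$-conjugacy class and generate the same cyclic group precisely when they lie in the same orbit of this action. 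So I would argue that the number of distinct $G$-classes meeting $\langle x\rangle - \{1\}$ is at least the number of orbits of $N_G(\langle x\rangle)/C_G(\langle x\rangle)$ on $\langle x\rangle - \{1\}$, and then bound the latter.

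\emph{Establishing the bound of two.} Suppose for contradiction that $N_G(\langle x\rangle)/C_G(\langle x\rangle)$ has at least three orbits on $\langle x\rangle - \{1\}$, and pick representatives $a, b, c$ from three distinct orbits. Each of $a, b, c$ generates a cyclic subgroup of $\langle x\rangle$, and crucially every pair among them satisfies $\langle a, b\rangle \leq \langle x\rangle$ is cyclic. If $a^G, b^G, c^G$ were three \emph{distinct} conjugacy classes, then $\{a^G, b^G, c^G\}$ would be a triangle in $\Delta(G)$, contradicting triangle-freeness. So the heart of the argument is to rule out coincidences among these classes: I would show that if $a, b$ lie in different orbits of $N_G(\langle x\rangle)/C_G(\langle x\rangle)$ on $\langle x\rangle - \{1\}$ yet are $G$-conjugate, this forces a configuration we can exploit. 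The cleanest route is the standard fact that if two elements of an abelian (here cyclic) subgroup $H = \langle x\rangle$ are conjugate in $G$, then they are already conjugate by an element of $N_G(H)$; applying this with $H = \langle x\rangle$ shows that distinct orbits of $N_G(\langle x\rangle)/C_G(\langle x\rangle)$ on $\langle x\rangle - \{1\}$ give distinct $G$-classes, so the orbit count and the class count agree and both are at most two.

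\emph{Deriving the order restriction.} Once I know $\langle x\rangle - \{1\}$ meets at most two $G$-classes, I would apply this to $x$ generating a cyclic group of order $n$. The elements of $\langle x\rangle$ of each divisor order $d \mid n$, $d > 1$, that are \emph{generators} of the unique subgroup of order $d$ form at least one orbit under the automorphism action — in fact elements of distinct orders can never be conjugate (conjugation preserves order), so each divisor $d > 1$ of $n$ contributes at least one $G$-class meeting $\langle x\rangle - \{1\}$. Hence $n$ can have at most two divisors exceeding $1$, which forces $n$ to be either prime $p$ (divisors $1, p$) or the square $p^2$ of a prime (divisors $1, p, p^2$); any $n$ with two distinct prime factors, or a prime cube, would produce at least three such divisors and thus at least three classes, a triangle. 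This gives the final conclusion that every nontrivial element has prime or prime-squared order.

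The step I expect to be the main obstacle is the coincidence-control in the second paragraph: making precise that counting $G$-classes meeting $\langle x\rangle - \{1\}$ is equivalent to (or at least bounded below by) counting $N_G(\langle x\rangle)/C_G(\langle x\rangle)$-orbits. The subtlety is that two elements of $\langle x\rangle$ of the \emph{same} order might be conjugate in $G$ without being conjugate inside $N_G(\langle x\rangle)$ — a priori some outside element could conjugate one into the other. Resolving this rests on the observation that if $g^{-1} a g = b$ with $a, b \in \langle x\rangle$, then $g$ conjugates $\langle a\rangle = \langle x\rangle$ (when $a$ is a generator) or the relevant characteristic subgroup to itself, placing $g$ in the appropriate normalizer; I would handle the generator case first and then note that for non-generators one restricts to the cyclic subgroup they generate, where the same argument applies. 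Getting this bookkeeping exactly right, so that the two-orbit bound and the two-class bound are genuinely equivalent rather than merely one-directional, is where care is needed.
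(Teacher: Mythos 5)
Your class-count bound and the derivation of the prime/prime-square restriction are correct and essentially the paper's argument: three distinct classes meeting $\langle x\rangle\setminus\{1\}$ are pairwise adjacent (representatives chosen inside $\langle x\rangle$ generate cyclic subgroups of $\langle x\rangle$), giving a triangle; and your divisor count (the paper instead exhibits the explicit triangles $x^G$, $(x^{p^\alpha})^G$, $(x^{q^\beta})^G$ and $x^G$, $(x^p)^G$, $(x^{p^2})^G$) forces $|x|\in\{p,p^2\}$. The genuine gap is in your orbit bound. The ``standard fact'' you invoke --- that two elements of a cyclic subgroup $H=\langle x\rangle$ conjugate in $G$ are already conjugate in $N_G(H)$ --- is true only for \emph{generators} of $H$ (if $a^g=b$ with $\langle a\rangle=\langle b\rangle=H$, then $H^g=H$), and that generator case is exactly what the paper asserts. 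For non-generators it is false, and your proposed repair (``restrict to the cyclic subgroup they generate'') only places the conjugating element in $N_G(\langle a\rangle)$, which can be strictly larger than $N_G(\langle x\rangle)$, so it does not yield $N_G(\langle x\rangle)$-conjugacy. Concretely, let $V=\langle a\rangle\times\langle c\rangle\cong Z_9\times Z_9$ and $G=V\rtimes\langle t\rangle$ with $a^t=a^{-1}c^3$, $c^t=c$, $t^2=1$: then $N_G(\langle a\rangle)=C_G(\langle a\rangle)=V$, so the quotient has eight singleton orbits on $\langle a\rangle\setminus\{1\}$, yet $(a^3)^t=a^{-3}$, so the distinct orbits $\{a^3\}$ and $\{a^6\}$ lie in a single $G$-class. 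Thus your claimed equivalence ``orbit count $=$ class count,'' and in particular the inequality ``classes $\geq$ orbits'' on which your orbit bound rests, fails as a general fact; the true general inequality goes the other way and is useless for your purpose.

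The orbit bound is nevertheless recoverable, but only \emph{after} the order restriction (which, as you note, follows from the class bound alone), so the fix is to reorder the logic. If $|x|=p$, every nontrivial element of $\langle x\rangle$ is a generator, and the true generator fact gives orbits $=$ classes $\leq 2$. If $|x|=p^2$, generators and the elements of order $p$ lie in distinct classes (conjugation preserves order), so the class bound forces exactly one class of each; the generator fact then makes $N_G(\langle x\rangle)/C_G(\langle x\rangle)$ transitive on the $p(p-1)$ generators, hence equal to all of $\Aut(\langle x\rangle)$, and since the restriction map $\Aut(Z_{p^2})\to\Aut(Z_p)$ is surjective, the action is also transitive on $\langle x^p\rangle\setminus\{1\}$, giving exactly two orbits. (The paper's own ``thus'' is likewise terse about non-generator orbits, but it rests only on the correct generator statement, whereas your write-up is anchored on a false general fusion principle.)
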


\begin{proof} 
Suppose $1 \ne x \in G$.  The the conclusion that the number of distinct conjugacy classes intersecting $\langle x \rangle$ is an immediate consequence of the fact that $\Delta (G)$ is triangle-free.  Observe that any two generators of $\left\langle x\right\rangle$ are conjugate in $G$ if and only if they are conjugate in $N_{G} (\left\langle x \right\rangle )$.  Thus, it follows that $N_{G} (\left\langle x \right\rangle )/C_{G} (\left\langle x \right\rangle )$ has at most two orbits on its action on $\left\langle x\right\rangle -\{1\}$.  Suppose now that there exists $x \in G \setminus \{ 1 \}$ such that $|x| = p^{\alpha} q^{\beta}$ for distinct primes $p$ and $q$.  Then the vertices $x^{G}$, $(x^{p^{\alpha }})^{G}$, and $(x^{q^{\beta }})^{G}$ give rise to a triangle, which is a contradiction.  Similarly, if $|x| = p^{k}$ for some $k\geq 3$, then the vertices $x^{G}$, $(x^{p})^{G}$ and $(x^{p^{2}})^{G}$ induce a triangle, again a contradiction.  Therefore, we conclude that the order of every nontrivial element is either prime or a square of a prime.
\end{proof}

We now consider groups $G$ of odd order where $\Delta (G)$ is triangle-free.  We begin to characterize these groups.

\begin{lemma}\label{Group exponent} 
Suppose that $G$ is a group so that $\Delta (G)$ is triangle-free and $|G|$ is odd. Then the nontrivial elements of $G$ have prime order.  Moreover, either $Z(G) = 1$ or $G$ is a $p$-group and $\exp (G) = p$.
\end{lemma}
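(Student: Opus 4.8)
The plan is to build on Lemma~\ref{Element orders}, which already guarantees that every nontrivial element of $G$ has order $p$ or $p^2$ for some prime $p$. To prove the first assertion it therefore suffices to exclude elements of order $p^2$ when $|G|$ is odd, and the key leverage will be a parity argument. Suppose, for contradiction, that $x \in G$ has order $p^2$; since $|G|$ is odd, $p$ is an odd prime. Set $C = C_G(\gen{x})$ and $N = N_G(\gen{x})$, so that $N/C$ embeds in $\Aut(\gen{x}) \cong \Aut(\Z/p^2\Z)$, and recall from Lemma~\ref{Element orders} that $N/C$ has at most two orbits on $\gen{x} - \{1\}$.

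First I would analyze these orbits. Because conjugation preserves element order, each orbit consists entirely of elements of a single order, so the $p-1$ elements of order $p$ and the $p(p-1)$ generators (the elements of order $p^2$) lie in disjoint unions of orbits; in particular there is at least one orbit of each type. For the orbit count to be exactly two we would need $N/C$ to be transitive on the $p(p-1)$ generators. Here is the crux: $\Aut(\Z/p^2\Z)$ is cyclic of order $p(p-1)$ and acts \emph{regularly} on the set of generators, so a subgroup $H \le \Aut(\Z/p^2\Z)$ is transitive on generators if and only if $H$ is the whole group, of order $p(p-1)$. But $|N/C|$ divides $|G|$ and is therefore odd, whereas $p(p-1)$ is even for every odd prime $p$. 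Hence $N/C$ is a proper subgroup, is not transitive on the generators, and produces at least two orbits of order-$p^2$ elements together with at least one orbit of order-$p$ elements --- at least three orbits in all, contradicting Lemma~\ref{Element orders}. Therefore no element of order $p^2$ exists, and every nontrivial element of $G$ has prime order.

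For the second assertion, assume $Z(G) \ne 1$ and fix a nontrivial central element $z$; by the first part $z$ has prime order, say $p$. If some element $y \in G$ had order a prime $q \ne p$, then $z$ and $y$ would commute and generate $\gen{z} \times \gen{y}$, which is cyclic of order $pq$ and hence contains an element of composite order --- impossible by the first part. Thus every nontrivial element of $G$ has order exactly $p$, so $G$ is a $p$-group and $\exp(G) = p$, completing the proof. The main obstacle is the orbit/parity step of the second paragraph: everything hinges on recognizing that a hypothetical order-$p^2$ element forces $N/C$ to realize the full, even-order automorphism group $\Aut(\Z/p^2\Z)$, which the oddness of $|G|$ forbids.
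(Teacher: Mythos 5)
Your proof is correct, but the first (and main) assertion is handled by a genuinely different route than the paper's. The paper disposes of a hypothetical element $x$ of order $p^2$ by exhibiting an explicit triangle: since $|G|$ is odd, no nontrivial element is real, so $x^G \neq (x^{-1})^G$, and then $x^G$, $(x^{-1})^G$, $(x^p)^G$ are three distinct classes all meeting $\langle x \rangle$, giving a triangle. You instead invoke the \emph{second} clause of Lemma~\ref{Element orders} (at most two $N_G(\langle x \rangle)/C_G(\langle x \rangle)$-orbits on $\langle x \rangle - \{1\}$), observe that orbits preserve element order so transitivity on the $p(p-1)$ generators would be forced, and then use the fact that $\mathrm{Aut}(\mathbb{Z}/p^2\mathbb{Z})$ acts regularly on generators, so transitivity would force $|N/C| = p(p-1)$, which is even and hence impossible in an odd-order group. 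Both arguments are parity arguments at heart --- the paper's non-reality fact for odd-order groups is itself proved by an oddness computation --- but yours trades the one-line triangle for an orbit count: it buys you independence from the reality fact (you never need to know whether $x$ and $x^{-1}$ are conjugate) at the cost of leaning on the orbit clause of Lemma~\ref{Element orders} and the regularity of the automorphism action, whereas the paper's version is shorter and only uses the weaker ``at most two classes intersect $\langle x \rangle$'' clause. Your second paragraph (central element of order $p$, product $zy$ of order $pq$, Cauchy to conclude $G$ is a $p$-group of exponent $p$) is essentially identical to the paper's.
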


\begin{proof}
Consider an element $x \in G \setminus \{1\}$.  By Lemma \ref{Element orders}, $|x| = p$ or $p^{2}$ for some prime $p$. Suppose that $|x|=p^{2}$.  Since $x^{G} \neq (x^{-1})^{G}$ (as $G$ has odd order), the vertices $x^{G},(x^{-1})^{G}$ and $(x^{p})^{G}$ induce a triangle, which is a contradiction.  Therefore, $|x|=p$ and the first conclusion holds.

To prove the second conclusion, suppose that $Z(G) \neq 1$. We can find an element $1 \ne z \in Z(G)$ so that $z$ is an element of prime order $p$.  If another prime $q$ divides $|G|$, then $G$ would have an element of order $pq$ and this contradicts Lemma \ref{Element orders}.  Therefore, $G$ is a $p$-group.
\end{proof}

We are now able to classify the groups $G$ of odd order where $\Delta (G)$ is triangle-free.  This theorem includes Theorem \ref{intro-|G|=odd}.

\begin{theorem}\label{|G|=odd} 
Let $G$ be a group of odd order. Then $\Delta (G)$ is triangle-free if and only if one of the following cases occurs:
\begin{itemize}
\item[(1)] $G$ is a $3$-group of exponent $3$;
\item[(2)] $G$ is a Frobenius group of order $3 \cdot 7^a$ for a positive integer $a$ where the Frobenius kernel $N$ has exponent $7$ and for every nontrivial element $1 \ne x \in N$, the normalizer $N_G(\langle x \rangle)$ contains a Frobenius complement of $G$ which has order $3$.
\end{itemize}
\end{theorem}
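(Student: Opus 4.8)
The plan is to prove both implications, and the organizing observation for the easy direction is that in each listed group every conjugacy class $x^G$ is adjacent in $\Delta(G)$ only to the class $(x^{-1})^G$, so that $\Delta(G)$ is a disjoint union of edges and is a fortiori triangle-free. For the implication ``(1) or (2) $\Rightarrow$ triangle-free'', I would first record that every nontrivial element has prime order ($3$ in case (1); $3$ or $7$ in case (2)), so any cyclic subgroup generated by two nontrivial elements has prime order, and since $3$ and $7$ are coprime no $Z_3$ can share a cyclic overgroup with a $Z_7$. Hence an edge $x^G\sim y^G$ forces $|x|=|y|$ and $y'\in\langle x'\rangle$ for suitable conjugates, i.e. $y^G=(x^{-1})^G$. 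It then remains to see that exactly two classes meet each cyclic subgroup: for a subgroup of order $3$ this is automatic since $G$ has odd order (so $x\not\sim x^{-1}$), while for the order-$7$ subgroups in case (2) it is precisely the hypothesis that $N_G(\langle x\rangle)$ contains a complement of order $3$, which realizes the subgroup $Z_3\le\Aut(Z_7)$ acting with two orbits on $\langle x\rangle\setminus\{1\}$. Thus every vertex has degree one and $\Delta(G)$ is a matching.

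For the forward direction I would invoke Lemma \ref{Group exponent}: every nontrivial element has prime order, and either $G$ is a $p$-group of exponent $p$ or $Z(G)=1$. In the $p$-group case, for $1\ne x$ the quotient $N_G(\langle x\rangle)/C_G(\langle x\rangle)$ is simultaneously a $p$-group (a section of $G$) and a subgroup of $\Aut(Z_p)\cong Z_{p-1}$, hence trivial; so by Lemma \ref{Element orders} the $p-1$ generators of $\langle x\rangle$ lie in $p-1$ distinct classes, forcing $p-1\le 2$. As $p$ is odd this gives $p=3$, and exponent $3$ since all orders are prime, yielding case (1).

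In the remaining case $Z(G)=1$ with at least two prime divisors, $G$ is solvable by the Feit--Thompson theorem. I would then show that the Fitting subgroup $F=F(G)$ is a $p$-group of exponent $p$ (it is nilpotent with elements of prime order, so only one prime occurs), that $O_{p'}(G)=1$ and $C_G(F)=Z(F)$, and that every element of order $q\ne p$ acts fixed-point-freely on $F$ (a common fixed point would produce an element of order $pq$). Consequently a Hall $p'$-subgroup $H$ acts fixed-point-freely on $F$, so $FH$ is Frobenius and $H$ is a Frobenius complement; since in a Frobenius complement every subgroup of order a product of two primes is cyclic while every element of $G$ has prime order, $H$ must be a $q$-group, and as its Sylow subgroup has exponent and order $q$ we get $H\cong Z_q$ and $|G|=p^aq$. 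The main obstacle is to upgrade this to the statement that $G$ \emph{itself} is Frobenius, equivalently that $F$ is the full Sylow $p$-subgroup: for this I would use that $C_G(w)$ is a $p$-group for every nontrivial $p$-element $w$ (again to avoid order-$pq$ elements) to force the Sylow $p$-subgroups to form a trivial-intersection set, whence $G=F\rtimes H$ is Frobenius with kernel $F$. This is the crux, and it is also what breaks the superficial $p\leftrightarrow q$ symmetry in the numerology below.

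Once $G$ is Frobenius with complement $H=\langle h\rangle\cong Z_q$, the complement is self-normalizing, so $N_G(\langle h\rangle)=\langle h\rangle=C_G(h)$; Lemma \ref{Element orders} then puts the $q-1$ generators of $\langle h\rangle$ into $q-1$ distinct classes, forcing $q-1\le 2$ and hence $q=3$. Finally, applying Lemma \ref{Element orders} to $\langle x\rangle$ for $x\in F$ of order $p$, the image $d$ of $N_G(\langle x\rangle)$ in $\Aut(Z_p)$ has order prime to $p$ and dividing $|G|$, so $d\mid 3$; here $d=1$ would give $p-1\le 2$ and $p=3=q$, which is impossible, so $d=3$, and then $3\mid p-1$ together with $(p-1)/3\le 2$ forces $p-1=6$, i.e. $p=7$. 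Tracking this shows the required order-$3$ automorphism of $\langle x\rangle$ is induced by a complement, which is exactly the normalizer condition in (2). Thus $G$ is as in case (2), completing the classification.
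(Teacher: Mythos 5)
Your converse direction and most of your forward direction are sound, and in fact your converse (showing $\Delta(G)$ is a perfect matching, each class adjacent only to the inverse class) is a clean strengthening of the paper's argument. But there is a genuine gap at exactly the point you flag as the crux. Having produced $F = F(G) = O_p(G)$ of exponent $p$ and a Hall $p'$-subgroup $H \cong Z_q$, you must rule out the $2$-Frobenius configuration $G = F \rtimes (Z_q \rtimes Z_p)$, i.e.\ you must show the Sylow $p$-subgroup $P$ equals $F$. Your proposed mechanism --- ``$C_G(w)$ is a $p$-group for every nontrivial $p$-element $w$, hence the Sylow $p$-subgroups form a TI set, hence $G$ is Frobenius'' --- does not work, because the implication from the centralizer condition to TI is false as a general principle, and it is provably unable to do the job here: every Sylow $p$-subgroup of $G$ contains the nontrivial normal subgroup $F$, so the Sylow $p$-subgroups are TI if and only if $P$ is already normal (i.e.\ $P = F$ after replacing $F$ by $P$). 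Meanwhile, in the hypothetical bad configuration $F \rtimes (Z_q \rtimes Z_p)$ with all elements of prime order, centralizers of $p$-elements \emph{are} $p$-groups --- that hypothesis is automatic from ``all element orders are prime'' and holds in the very configuration you need to exclude. (Compare $S_4$: every $2$-element has a $2$-group centralizer, yet the Sylow $2$-subgroups pairwise intersect in the normal four-group.) So your TI step is circular: its conclusion is equivalent to what is being proved, and its hypothesis does not distinguish the good case from the bad one.

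Excluding the $2$-Frobenius case is real work: one needs, for instance, a module-theoretic argument that a top $p$-element $x$ acting on $F$ inside the $p$-group $F\langle x\rangle$ forces elements of order $p^2$ (the operator identity $1 + x + \cdots + x^{p-1} = (x-1)^{p-1}$ on an exponent-$p$ section, combined with the faithfulness forced by the Frobenius action of $G/F$ on $FQ/F$), or else an appeal to the classification of groups all of whose elements have prime order. The paper sidesteps this entirely by citing \cite{CDLW}, which delivers immediately that a non-$p$-group with all elements of prime order is Frobenius with complement of prime order (or $A_5$, excluded here by odd order). Everything downstream in your sketch --- self-normalization of the complement giving $q = 3$, the divisibility argument $d \mid 3$ in $\Aut(Z_p)$ giving $p = 7$, and locating the order-$3$ automorphism inside a complement --- is correct once Frobenius-ness is in hand, as is your $p$-group case via the triviality of $N_G(\langle x\rangle)/C_G(\langle x\rangle)$. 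To repair the proposal, either cite the classification as the paper does or supply the module-theoretic exclusion of $F \rtimes (Z_q \rtimes Z_p)$ explicitly.
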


\begin{proof}
Suppose first that $\Delta (G)$ is triangle-free.  Assume first that $G$ is a $p$-group.  We know that we have $1 \ne z \in Z(G)$, so $\langle z \rangle$ intersects $p-1$ conjugacy classes of $G$, and by Lemma \ref{Element orders}, we have $p - 1 = 2$; so $p = 3$.  Applying Lemma \ref{Group exponent}, $G$ has exponent $3$ and we have (1).  

Now, assume $G$ is not a $p$-group.  {}From Lemma \ref{Group exponent}, we know that all elements have prime order.  By the classification of groups with prime order in \cite{CDLW}, we see that $G$ is a Frobenius group with a Frobenius complement of prime order $q$.  Let $Q$ be a Frobenius complement, and we have $Q = \langle x \rangle$ for some element $x$.  Observe that each nontrivial element of  $\langle x \rangle$ lies in its own conjugacy class of $G$ and so, $\langle x \rangle $ intersects nontrivially $q-1$ conjugacy classes of $G$.   Using Lemma \ref{Element orders}, we have $q-1 = 2$, so $q = 3$.  

We have that $N$ is the Frobenius kernel of $G$.   Let $1 \ne z \in Z(N)$ be an element of order $p$ for some prime $p$ (note that $p \ge 5$).  We see that either $H$ is contained in $N_G (\langle z \rangle)$ or $H \cap N_G (\langle z \rangle) = 1$.  Observe that if $H \cap N_G(\langle z \rangle) = 1$, then $N_G (\langle z \rangle) = C_G (\langle z \rangle)$ and $N_G (\langle z \rangle)/C_G (\langle z \rangle) = 1$ has $p-1$ orbits on $\langle z \rangle$ which contradicts Lemma \ref{Element orders}.  Thus, we have $H \le N_G (\langle z \rangle)$.  Since $H$ does not centralize $z$, we have $H \cap C_G (\langle z \rangle) = 1$, and so, $N_G (\langle z \rangle)/C_G (\langle z \rangle) \cong Z_3$ has $(p-1)/3$ orbits on $\langle z \rangle$.  By Lemma \ref{Element orders}, we know that $(p-1)/3 \le 2$.  Since $p \ge 5$, we conclude that $p = 7$.  Therefore, $N$ is a $7$-group.  Since all elements of $N$ have prime order (i.e. order $7$), we apply Lemma \ref{Group exponent} to conclude that $N$ has exponent $7$.  Let $1 \ne x \in N$.  By Lemma \ref{Element orders}, we know that $N_G (\langle x \rangle)/C_G(\langle x \rangle)$ has at most two orbits that intersect $\langle x \rangle$ nontrivially.  Hence, $N_G (\langle x \rangle)$ contains a Frobenius complement of $G$.  This proves (2).

Conversely, suppose that $G$ is a $3$-group of exponent $3$.  Let $C_1$, $C_2$, and $C_3$ be three nontrivial conjugacy classes of $G$, and let $x_i$ be a representative of $C_i$.  We know that $x_i$ has order $3$.  Thus, $C_i$ is adjacent to $C_j$ if and only if $\langle x_i \rangle$ is conjugate to $\langle x_j \rangle$.  If $C_i \ne C_j$, then this occurs if and only if $x_j$ is conjugate to $(x_i)^2$.  Hence, there can only be edge between at most two of distinct $C_1$, $C_2$, and $C_3$.  We conclude that $\Delta (G)$ has no triangles.

Now, suppose that $G$ is a Frobenius group of order $3 \cdot 7^a$ whose Frobenius kernel has exponent $7$ and for every nontrivial element $1 \ne x \in N$, the normalizer $N_G(\langle x \rangle)$ contains a Frobenius complement of $G$.  Let $N$ be the Frobenius kernel and let $H$ be a Frobenius complement.  Take $C_1$, $C_2$, and $C_3$ be three distinct nontrivial conjugacy classes of $G$, and let $x_i$ be a representative of $C_i$.  We know that $x_i$ has order $3$ or $7$.  Suppose $C_i \ne C_j$.  If $x_i$ and $x_j$ have different orders, then since $G$ is a Frobenius group, they will not commute, and so they will not be adjacent in $\Delta (G)$.  Hence, $C_1$, $C_2$, and $C_3$ form a triangle only if $x_1$, $x_2$, and $x_3$ have they same order.  If this order is $3$, they must all intersect $H$ which is not possible since $H$ only has two nonidentity elements.  Thus, they must have order $7$.  For $C_1$, $C_2$, and $C_3$ to form a triangle, they all must intersect nontrivially some subgroup $\langle x \rangle$ where $x$ has order $7$.  Since $N_G (\langle x \rangle)$ contains a Frobenius complement, we see that $\langle x \rangle$ has only two orbits under the action of $N_G (\langle x \rangle)/C_G (\langle x \rangle)$ and so, at most two of $C_1$, $C_2$, and $C_3$ can intersect $\langle x \rangle$ nontrivially, which is a contradiction.  Therefore, we conclude that $\Delta (G)$ has no triangles.  
\end{proof}

\section{Groups of even order}

In this section, we classify the groups $G$ with even orders where $\Delta (G)$ is triangle-free.  Assume $Z(G) \neq 1$. Since elements of $G$ have prime power orders, then $G$ is a $p$-group.  Since $|G|$ is even, it follows that $G$ is a $2$-group. 

\begin{lemma} \label{Z(G) >1 implies 2}
Suppose $G$ is a group with $\Delta (G)$ is triangle-free, $|G|$ is even, and $Z(G) > 1$, then $G$ is a $2$-group.
\end{lemma}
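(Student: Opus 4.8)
The plan is to exploit the interaction between a nontrivial central element and the strong order restriction provided by Lemma \ref{Element orders}, which guarantees that every nonidentity element of $G$ has order a prime or the square of a prime, and in particular a prime-power order. The strategy is to show that at most one prime can divide $|G|$, so that $G$ is a $p$-group, and then to read off $p = 2$ from the hypothesis that $|G|$ is even.

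First I would produce a central element of prime order. Since $Z(G) > 1$, choose a nonidentity $z_0 \in Z(G)$; by Lemma \ref{Element orders} its order is $p$ or $p^2$ for some prime $p$, so replacing $z_0$ by a suitable power yields a central element $z$ of prime order $p$ (a power of a central element remains central). Next I would argue by contradiction that no prime $q \neq p$ divides $|G|$. If such a $q$ existed, then Cauchy's theorem would supply an element $y \in G$ of order $q$. Because $z$ lies in $Z(G)$, the elements $z$ and $y$ commute and generate an abelian subgroup, so $zy$ has order equal to the least common multiple of $p$ and $q$, namely $pq$. But $pq$ is neither a prime nor the square of a prime, contradicting Lemma \ref{Element orders}.

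Hence $|G|$ is a power of the single prime $p$, so $G$ is a $p$-group; since $|G|$ is even, we must have $p = 2$, and therefore $G$ is a $2$-group. I do not expect any genuine obstacle in carrying this out: the argument is essentially the one sketched in the paragraph preceding the statement, and mirrors the centralizer argument in the second part of Lemma \ref{Group exponent}. The only points requiring a moment's care are confirming that an appropriate power of a prime-square-order central element is again central of prime order, and that the product of the central element with a commuting element of coprime order has order equal to the product of the two orders; both are routine.
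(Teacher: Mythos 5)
Your proposal is correct, and every step checks out: a power of a central element of order $p^2$ is central of prime order, and the product of commuting elements of coprime prime orders $p$ and $q$ has order $pq$, which violates Lemma \ref{Element orders}. However, it is packaged differently from the paper's own proof of this lemma. The paper works directly with triangles rather than through Lemma \ref{Element orders}: it first fixes $x \in G$ of order $2$ (Cauchy, since $|G|$ is even), shows that a central element $z$ of order $3$ would make $x^G$, $z^G$, $(z^2)^G$ a triangle (dismissing central elements of larger order separately), concludes that $Z(G)$ contains an involution, and then asserts that ``a similar argument'' rules out any element of odd prime order. Your single-pass argument --- central element of prime order $p$, hence no element of order $q \ne p$, hence $p$-group, hence $2$-group by evenness --- is exactly the mechanism the paper itself uses in the second paragraph of Lemma \ref{Group exponent} and sketches in the paragraph preceding this lemma (where the bare claim ``elements of prime power order implies $p$-group'' is stated too loosely; your use of the central element is what makes it valid). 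What your route buys is completeness and uniformity: it replaces the paper's somewhat sketchy ``similar argument'' step with one clean contradiction, and it never needs to establish a central involution first. What the paper's route buys is that the triangles are exhibited explicitly, so the proof is visibly self-contained and does not lean on the order classification of Lemma \ref{Element orders}; but both proofs ultimately rest on the same fact, namely that a central element together with a commuting element of coprime order generates a cyclic subgroup meeting three distinct conjugacy classes.
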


\begin{proof}
Since $|G|$ is even, we know that $G$ has an element of order $2$.  We claim that $Z(G)$ contains an element of order $2$.  Thus, we can find $x \in G$ and $z \in Z(G)$ so that $x$ has order $2$ and $z$ has order $3$.  (If $o (z) > 3$, then $\Delta (G)$ contains a triangle.)  Observe that $x^G$, $z^G$, $(z^2)^G$ forms a triangle.  Thus, $Z(G)$ contains an element of order $2$.  A similar argument shows that if $G$ contains an element whose order is a prime greater than $2$, then $\Delta (G)$ will contain a triangle.  
\end{proof}

We now characterize the $2$-groups where $\Delta (G)$ is triangle-free.  We say an element $g$ of a group $G$ is {\it real} if $g$ is a conjugate to $g^{-1}$.  We say $G$ is a {\it real group} if all of its elements are real.  Please note that $G$ is a real group if and only if all of its irreducible characters are real valued (see Problem 2.11 of \cite{text}).  Note that this next theorem includes Theorem \ref{intro-Characterize 2-group}.

\begin{theorem}\label{Characterize 2-group}
Suppose $G$ is a $2$-group.  Then the following are equivalent:
\begin{enumerate}
\item $\Delta (G)$ is triangle-free
\item $G$ has exponent at most $4$ and every element of order $4$ is conjugate to its inverse.
\item $G$ has exponent at most $4$ and $G$ is a real group.
\end{enumerate}  
In particular, if $\Delta (G)$ is triangle-free and $G$ is a $2$-group, then $Z(G)$ is elementary abelian.
\end{theorem}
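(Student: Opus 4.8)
The plan is to prove the theorem by establishing the cycle of implications
$(1) \Rightarrow (2) \Rightarrow (3) \Rightarrow (1)$, together with the final remark about $Z(G)$.

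For $(1) \Rightarrow (2)$, I would start from Lemma~\ref{Element orders}, which tells us that every nontrivial element of the $2$-group $G$ has order $2$ or $4$; hence $\exp(G) \le 4$ is immediate. It remains to show that every element $x$ of order $4$ is real, i.e. conjugate to $x^{-1} = x^3$. The cyclic subgroup $\langle x \rangle$ has order $4$ and its nonidentity elements are $x$, $x^2$, and $x^3 = x^{-1}$, where $x^2$ has order $2$. By Lemma~\ref{Element orders}, $\langle x \rangle - \{1\}$ meets at most two conjugacy classes of $G$. Since $x$ and $x^2$ have different orders they lie in different classes, so these two classes must be $x^G$ and $(x^2)^G$; in particular $x^{-1} = x^3$, which also has order $4$, must lie in $x^G$. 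Thus $x$ is conjugate to its inverse, giving $(2)$.

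The implication $(2) \Leftrightarrow (3)$ is essentially a restatement once we know $\exp(G) \le 4$: every element has order $1$, $2$, or $4$; elements of order $1$ or $2$ equal their own inverses and so are automatically real, while elements of order $4$ are real precisely under the stated hypothesis. Hence, given $\exp(G)\le 4$, the condition ``every element of order $4$ is real'' is equivalent to ``$G$ is a real group,'' and I would simply spell this out. The forward direction $(3) \Rightarrow (1)$ is the substantive converse: assuming $\exp(G)\le 4$ and $G$ real, I would argue that $\Delta(G)$ is triangle-free. Take three distinct nontrivial classes $C_1, C_2, C_3$ with representatives $x_i$ of order $2$ or $4$. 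Two classes are adjacent only when representatives generate a common cyclic subgroup, which in a group of exponent $4$ forces them into a cyclic subgroup of order at most $4$. The key observation is that within any cyclic $\langle y \rangle$ of order $4$, realness collapses $y$ and $y^{-1}$ into one class, so $\langle y \rangle - \{1\}$ again meets only the two classes $y^G$ and $(y^2)^G$; a cyclic subgroup of order $2$ meets only one class. I expect the main obstacle to be the bookkeeping showing that no three pairwise-adjacent distinct classes can coexist: one must check that a putative triangle would require three distinct classes all realized inside cyclic subgroups of order $\le 4$, and that the realness condition caps the number of available classes at two, exactly as in the converse direction of Theorem~\ref{|G|=odd}.

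For the final assertion, suppose $\Delta(G)$ is triangle-free with $G$ a $2$-group. Every $z \in Z(G)$ has order dividing $4$ by the above, so it suffices to rule out central elements of order $4$. If $z \in Z(G)$ had order $4$, then $z$ being central is fixed by conjugation, so $z$ is conjugate only to itself and cannot be conjugate to $z^{-1} = z^3 \ne z$; this contradicts the realness established in $(2)$. Hence every central element has order at most $2$, so $Z(G)$ is elementary abelian, completing the proof.
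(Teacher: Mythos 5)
Your implications $(1)\Rightarrow(2)$, $(2)\Leftrightarrow(3)$, and the final claim about $Z(G)$ are all correct and essentially match the paper; your route to $(1)\Rightarrow(2)$ via the ``at most two conjugacy classes meet $\langle x\rangle\setminus\{1\}$'' clause of Lemma~\ref{Element orders} is a mild repackaging of the paper's explicit triangles $\{x^G,(x^2)^G,(x^4)^G\}$ and $\{x^G,(x^2)^G,(x^{-1})^G\}$, and it works.

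The genuine gap is in $(3)\Rightarrow(1)$ (equivalently $(2)\Rightarrow(1)$), the substantive direction, which you explicitly defer as ``bookkeeping'' --- and the plan you sketch for it would fail as stated. You propose that a putative triangle would require ``three distinct classes all realized inside cyclic subgroups of order $\le 4$,'' killed by the cap of two classes per cyclic subgroup. But the three edges of a triangle may be witnessed inside three \emph{different} cyclic subgroups: adjacency of $C_i$ and $C_j$ only yields some cyclic $\langle y_{ij}\rangle$ meeting both, and these subgroups need not coincide. So the per-subgroup cap does not, by itself, preclude a triangle. What actually closes the argument --- and is what the paper does --- is a case analysis on element orders: an edge between distinct classes cannot be witnessed by a subgroup of order $2$ (it meets only one class), so it is witnessed by some $\langle y\rangle$ of order $4$, whose nontrivial elements fall, using realness, into exactly the two classes $y^G$ and $(y^2)^G$. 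Hence every edge joins a class of order-$4$ elements to a class of order-$2$ elements (indeed the only possible neighbor of $y^G$ is $(y^2)^G$). Among any three distinct nontrivial classes, two have representatives of the same order, since all orders lie in $\{2,4\}$, and by the above those two cannot be adjacent; so no triangle exists. This completion is only a few lines, but it is precisely the step your outline leaves open, and note that your appeal to the converse direction of Theorem~\ref{|G|=odd} inherits the same subtlety there.
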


\begin{proof}
Suppose $G$ is a $2$-group and $\Delta (G)$ is triangle-free.  Suppose that $x \in G$ has order greater than $4$.  Then $x^G$, $(x^2)^G$, and $(x^4)^G$ forms a triangle, a contradiction.  Thus, the exponent of $G$ is at most $4$.  If $x$ and $x^{-1}$ lie in different conjugacy classes, then $x^G$, $(x^2)^G$, and $(x^{-1})^G$ form a triangle, a contradiction.  Thus, $x$ and $x^{-1}$ must be conjugate.

Conversely, suppose that $G$ has exponent at most $4$ such every element of order $4$ is conjugate to its inverse.  If $G$ has exponent $2$, then we know $G$ is elementary abelian, and thus, $\Delta (G)$ has no edges, so certainly no triangles.  Thus, we may assume $G$ has exponent $4$.  Let $w^G$, $x^G$, and $y^G$ be three distinct conjugacy classes.  If there is an edge between $x^G$ and $y^G$, then $\langle x, y \rangle$ is cyclic, and so, it has order $4$.  The hypothesis that the elements of order $4$ are conjugate to their inverses imply that one of $x$ and $y$ has order $2$ and the other has order $4$.  Without loss of generality, $x$ has order $2$ and $y$ has order $4$.  If $w$ has order $2$, then clearly, $w^G$ is not adjacent to either $x^G$ or $y^G$, and if $w$ has order $4$, the hypothesis on elements of order $4$ implies that $w^G$ is not adjacent to $y^G$.  In both cases, we see that $x^G$, $y^G$, and $w^G$ do not form a triangle.  Therefore, $\Delta (G)$ is triangle-free.

Suppose $G$ has exponent at most $4$ and every element of order $4$ is conjugate to its inverse.  Let $x$ be a nonidentity element of $G$.  If $x$ has order $2$, then $x = x^{-1}$ and if $x$ has order $4$, we are assuming $x$ is conjugate to $x^{-1}$.  It follows that $x$ is real.  We deduce that all elements of $G$ are real, and so, $G$ is real.  Conversely, if $G$ has exponent $4$ and $G$ is a real group, then every element of order $4$ in $G$ will be conjugate to its inverse.

Suppose $z \in Z(G) \setminus \{ 1 \}$.  Since we must have that $z$ is conjugate to $z^{-1}$, it must be that $z = z^{-1}$ which implies that $z$ has order $2$.  We conclude that $Z(G)$ is elementary abelian.
\end{proof}

We note that all elementary abelian $2$-groups and all extra-special $2$-groups will satisfy the condition of Lemma \ref{Characterize 2-group}.  In particular, the quaternion and dihedral groups of order $8$ will have graphs that are triangle-free.  In addition, all semi-extraspecial $2$-groups will satisfy the conditions of Lemma \ref{Characterize 2-group}.  For examples with nilpotence class $3$, we have SmallGroup (64,23), SmallGroup (128, 1755), SmallGroup (128, 1758), SmallGroup (128, 1759), and SmallGroup (128, 1760).  

We now consider what additional structural information we can obtain about $2$-groups where $\Delta (G)$ is triangle-free.  In \cite{GupNew}, Gupta and Newman show that $2$-groups of exponent $4$ with $m$ generators have nilpotence class at most $3m-2$.   We would not be surprised if one could find a stronger bound on the nilpotence class of $G$, but that seems to be beyond the scope of this paper.  Since the our group is real-valued, we can prove the following regarding the quotient modulo the derived subgroup. 

\begin{lemma}
Let $G$ be a $2$-group where $\Delta (G)$ is triangle-free.  Then $G/G'$ is an elementary abelian $2$-group.
\end{lemma}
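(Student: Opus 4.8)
The plan is to avoid any direct commutator computation and instead leverage the characterization already established in Theorem~\ref{Characterize 2-group}: a $2$-group $G$ with $\Delta(G)$ triangle-free is a \emph{real} group (this is condition (3) there). The target statement, that $G/G'$ is elementary abelian, is equivalent to saying that the abelian group $G/G'$ has exponent dividing $2$, i.e.\ that every square of $G$ lies in $G'$. So the whole proof reduces to extracting exponent-$2$ information, and realness is exactly the right tool.

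First I would record the elementary but crucial functorial fact that realness is inherited by quotients. If $g$ is conjugate to $g^{-1}$ in $G$, say $g^{-1} = h g h^{-1}$, then in any quotient $G/N$ we have $(gN)^{-1} = g^{-1}N = (hN)(gN)(hN)^{-1}$, so $gN$ is conjugate to its own inverse. Applying this with $N = G'$ shows that the abelianization $G/G'$ is itself a real group.

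Next I would use that $G/G'$ is abelian. In an abelian group conjugation is trivial, so ``conjugate to its inverse'' degenerates to ``equal to its inverse''; hence every element $x \in G/G'$ satisfies $x = x^{-1}$, that is $x^2 = 1$. Therefore $G/G'$ is an abelian group of exponent at most $2$, which is precisely an elementary abelian $2$-group, completing the argument.

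I do not anticipate a genuine obstacle here; the only point requiring care is recognizing that one should pass through the real-group formulation rather than attempting to verify $g^2 \in G'$ by hand (a route that would tangle with the exponent-$4$ structure unnecessarily). As a cross-check I note there is a dual character-theoretic argument: realness forces every linear character $\lambda$ of $G$ to be real-valued, hence $\lambda^2 = 1$, so the dual group $\widehat{G/G'}$, and therefore $G/G'$ itself, has exponent $2$. The conjugacy argument is the cleaner of the two, so I would present that one.
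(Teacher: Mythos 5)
Your proof is correct, but it takes a different route from the paper's. The paper argues directly from the triangle-free hypothesis: if $xG'$ had order $4$ in $G/G'$, then $x^G$, $(x^2)^G$, $(x^3)^G$ would induce a triangle in $\Delta(G)$ unless $x^3 = x^g$ for some $g \in G$, in which case $x^2 = [x,g] \in G'$, contradicting the choice of $x$. So the paper's proof is a two-line self-contained computation, and it does not formally invoke Theorem~\ref{Characterize 2-group} at all. You instead route through condition (3) of that theorem ($G$ is a real group), add the observation that realness passes to quotients, and then note that an abelian real group has exponent at most $2$. Both arguments are sound, and they ultimately rest on the same fact --- that the triangle-free condition forces elements of order $4$ to be conjugate to their inverses, which is exactly what the paper's ``unless $x^3 = x^g$'' step re-derives inline. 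What your version buys is modularity: the quotient-inheritance of realness is a reusable general fact, and your character-theoretic cross-check (real linear characters satisfy $\lambda^2 = 1$, so the dual of $G/G'$ has exponent $2$) is a legitimate alternative consistent with the paper's own remark citing Problem 2.11 of Isaacs. What the paper's version buys is independence from the earlier theorem and brevity, since the commutator identity $x^{-1}x^g = [x,g]$ lands the square in $G'$ immediately. One small point of calibration: your worry that verifying $g^2 \in G'$ ``by hand'' would tangle with the exponent-$4$ structure is overstated --- the paper's direct computation shows it is in fact a one-step argument.
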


\begin{proof}
Suppose that there exists an element $x \in G$ so that $xG'$ has order $4$ in $G/G'$.  Then $x^G, (x^2)^G, (x^3)^G$ induces a triangle in $\Delta (G)$ unless $x^3 = x^g$ for some $g \in G$.  Hence, $x^2 = [x,g] \in G'$ which contradicts our choice of $x$.
\end{proof}

In light of Lemma \ref{Z(G) >1 implies 2}, if $G$ is not $2$-group, then $Z(G) = 1$.  In what follows, the structure of the groups $G$ are determined when $G$ is centerless.   We first consider the solvable case.  This includes Theorem \ref{intro-Solvable}.

\begin{theorem} \label{Solvable} 
If $G$ is a solvable group with $|G|$ even and $Z (G) = 1$, then $\Delta (G)$ is triangle-free if and only if $G$ is isomorphic to one of the following groups:
\begin{enumerate}
\item [(1)] Frobenius group $(Z_3^a) \rtimes Z_2$ where $a$ is a positive integer,
\item [(2)] Frobenius group $(Z_5^a) \rtimes Z_2$ where $a$ is a positive integer,
\item [(3)] Frobenius group $N \rtimes Z_3$ where $N$ has exponent at most $4$ and every element of order $4$ is conjugate in $N$ to its inverse,
\item [(4)] Frobenius group $((Z_3 \times Z_3)^a) \rtimes Q_8$ where $a$ is a positive integer,
\item [(5)] Frobenius group $((Z_5 \times Z_5)^a) \rtimes Q_8$ where $a$ is a positive integer, or
\item [(6)] $2$-Frobenius group where $G$ is a $\{2, 3\}$-group, a Sylow $3$-subgroup has order $3$, a Sylow $2$-subgroup has exponent $4$ and every element of order $4$ is conjugate in $G$ to its inverse, and if $N$ is the Fitting subgroup of $G$, then $N$ is an elementary abelian $2$-group and $G/N \cong S_3$.
\end{enumerate} 
\end{theorem}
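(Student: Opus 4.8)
The plan is to establish the forward direction (triangle-free implies one of the six families) and then verify the converse for each family. For the forward direction I would first invoke Lemma \ref{Element orders} to record that every nontrivial element of $G$ has order a prime or the square of a prime, so in particular every element has prime power order. Since $G$ is solvable with this property, a theorem of Higman on solvable groups all of whose elements have prime power order applies: $|G|$ is divisible by at most two primes and $G$ is a $p$-group, a Frobenius group, or a $2$-Frobenius group. By Lemma \ref{Z(G) >1 implies 2} the hypothesis $Z(G)=1$ excludes the $p$-group case, so $G$ is Frobenius or $2$-Frobenius, and I would treat these two regimes separately.

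In the Frobenius case, write $G=N\rtimes H$ with kernel $N$ and complement $H$, and recall $\gcd(|N|,|H|)=1$, $C_G(x)\le N$ for $1\ne x\in N$, and $C_G(h)\le H$ for $1\ne h\in H$. The first main step is to force $H\in\{Z_2,Z_3,Q_8\}$. If $|H|$ is even then $H$, being a Frobenius complement, has a unique involution, which is therefore central; an element of order $2p$ would then exist for any odd prime $p\mid|H|$, contradicting Lemma \ref{Element orders}, so $H$ is a $2$-group, hence cyclic or quaternion of exponent at most $4$, i.e.\ $Z_2$, $Z_4$, or $Q_8$. I would rule out $Z_4$ by noting that triangle-freeness makes an order-$4$ element $h$ real, while any inverting element fixes $h^2$ and hence lies in $C_G(h^2)=\langle h\rangle$, contradicting $h^g=h^{-1}$. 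If $|H|$ is odd, squarefreeness of $|H|$ follows from an orbit count on $\mathrm{Aut}(Z_{p^2})$ (the Sylow $p$-subgroup of $G$ equals that of $H$, so $N_G(\langle h\rangle)/C_G(\langle h\rangle)$ is a $p'$-section, yielding at least $p$ classes on order-$p^2$ elements); the absence of composite-order elements and of $S_3$-sections then forces $H=Z_p$, and malnormality gives $N_G(\langle h\rangle)=C_G(h)=\langle h\rangle$, so $p-1\le 2$ and $H=Z_3$.

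With $H$ determined I would identify $N$. When $|H|$ is even the central involution acts fixed-point-freely, hence inverts $N$, forcing $N$ abelian; a composite-order central element is forbidden, so $N$ is an $r$-group for a single odd prime $r$. Counting orbits of $N_G(\langle z\rangle)/C_G(\langle z\rangle)$ on $\langle z\rangle\setminus\{1\}$ for $z\in N$ (inversion gives $(r-1)/2$ orbits) bounds $r\le 5$, so $r\in\{3,5\}$; the same count applied to a putative element of order $r^2$ exceeds two orbits, forcing exponent $r$. For $H=Z_2$ this gives $N\cong Z_r^a$ (cases (1),(2)); for $H=Q_8$ the extra input is that $Q_8$ has no faithful fixed-point-free action on a one-dimensional $\mathbb{F}_r$-space, so its faithful fixed-point-free modules are sums of the two-dimensional quaternionic module, giving $N\cong(Z_r\times Z_r)^a$ (cases (4),(5)). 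When $H=Z_3$ the center argument instead makes $N$ a $2$-group of exponent at most $4$; triangle-freeness makes each order-$4$ element real in $G$, and since $|H|=3$ is odd and $C_G(x)\le N$, a $2$-group order count on $N_G(\langle x\rangle)$ places the inverting element already inside $N$, so order-$4$ elements are real in $N$ (case (3)).

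In the $2$-Frobenius case I would use the defining series $1<A<B<G$ with $B$ and $G/A$ Frobenius; the two-prime restriction and Sylow analysis force the primes to be $\{2,3\}$ with $B=A\rtimes Z_3$ (so a Sylow $3$-subgroup has order $3$) and $G/A\cong S_3$, where $A=F(G)$. Running the order and realness arguments inside a Sylow $2$-subgroup (of exponent $4$, order-$4$ elements real) and showing $O_2(G)=A$ is elementary abelian yields case (6). For the converse I would check each family directly: every edge of $\Delta(G)$ comes from two representatives generating a cyclic group, and the Frobenius (or $2$-Frobenius) structure together with the exponent and realness conditions limits such pairs enough that no three classes are mutually adjacent, the model computation being the $Z_3$-kernel argument already carried out in the proof of Theorem \ref{|G|=odd}. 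The main obstacle I anticipate is the $Q_8$ case together with case (6): identifying the kernel as a sum of two-dimensional quaternionic modules over $\mathbb{F}_3$ or $\mathbb{F}_5$, separating the elementary abelian Fitting subgroup from the exponent-$4$ Sylow $2$-subgroup in the $2$-Frobenius group, and verifying triangle-freeness in the nonabelian-complement cases, where more pairs of classes can be joined, are the delicate points.
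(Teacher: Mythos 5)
Your overall architecture matches the paper's: the element-order restriction from Lemma \ref{Element orders}, reduction via the prime-power-order (Higman/Brandl) classification to a two-prime Frobenius or $2$-Frobenius group, case analysis on the complement, then a direct converse check. The main organizational difference is in the Frobenius case: the paper splits on whether $G$ has a nontrivial normal subgroup of odd order and invokes Suzuki's classification of solvable CIT-groups \cite{ms} to get $G = P \rtimes T$ with abelian Sylow-$p$ kernel and Sylow $2$-complement, whereas you extract the complement restrictions directly from Frobenius-complement structure theory (unique central involution, cyclic or generalized quaternion Sylow $2$-subgroups, malnormality). Your route is self-contained and in two spots sharper than the paper: the $C_G(h^2) = \langle h \rangle$ argument ruling out a $Z_4$ complement justifies the non-realness of $h$ that the paper only asserts, and your $2$-part argument in case (3) is correct, since $g^2 \in C_G(x) \le N$ forces the odd part of an inverting element $g$ to be trivial, so $g \in N$. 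One imprecision: for $H \cong Q_8$ with $r \equiv 1 \pmod 4$, the stabilizer of a line in the kernel has order $4$, not $2$, so the class count on $\langle z \rangle \setminus \{1\}$ is $(r-1)/4$ rather than $(r-1)/2$; the bound weakens to $r \le 9$ and you need the congruence split the paper performs to land on $r \in \{3,5\}$. The conclusion survives, but your stated count is wrong in that subcase.

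The genuine gap is the $2$-Frobenius case, which you dispatch with ``the two-prime restriction and Sylow analysis force the primes to be $\{2,3\}$'' without any mechanism, and the obvious mechanism does not suffice. Once $H/N \cong Z_2$ is forced, the count of classes meeting a Sylow $p$-subgroup $\langle x \rangle$ of order $p^k$ is $(p^k-1)/2 \le 2$, which yields $p^k \in \{3,5\}$, not $p^k = 3$; nothing in your sketch eliminates $p^k = 5$, where $\langle x \rangle$ meets exactly two classes. (Note that the paper's own proof is weakest at exactly this step: it asserts a triangle for every $p > 3$, which the stated count does not deliver when $p^k = 5$.) The case genuinely resists the counting arguments: the group $\mathbb{F}_{16} \rtimes (Z_5 \rtimes Z_2)$ of order $160$ inside ${\rm A}\Gamma{\rm L}(1,16)$, with $Z_5$ acting by multiplication by a fifth root of unity and $Z_2$ by $v \mapsto v^4$, is a centerless solvable $2$-Frobenius $\{2,5\}$-group whose nontrivial elements have orders $2$, $4$, $5$, and whose order-$4$ elements are conjugate in $N$ to their inverses (for $g = n\sigma$ one has $g^{-1} = (\sigma(n))\sigma$ and $n + \sigma(n)$ lies in the image of $1+\sigma$); a case check on the possible edges then shows its graph is triangle-free, since distinct order-$4$ classes are never adjacent, distinct involution classes are never adjacent, and a $Z_5$ meets only two classes. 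So executing your plan at this step requires either a new argument excluding $p = 5$ or the realization that the claimed list must be enlarged; as written, ``Sylow analysis'' would fail.
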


\begin{proof}
By \cite{rb} and the fact that the order of any nontrivial element of $G$ is a prime or the square of a prime, we have $|G| = 2^m p^n$, for some odd prime $p$, and nonnegative integers $m$ and $n$.  Now, we consider two cases:
	
Case 1. $G$ has a nontrivial, normal subgroup of odd order. Since $G$ is a solvable $CIT$-group, by \cite[II, Theorem 1]{ms}, $G = P \rtimes T$ is a Frobenius group whose kernel and complement are $P$ and $T$, respectively. Moreover, $P$ is an abelian Sylow $p$-subgroup of $G$ and $T$ is a Sylow $2$-subgroup of $G$. By \cite[10.5.5]{djsr}, $T$ is cyclic or generalized quaternion group. If $|T| = 4$, then $G/P \cong T \cong \mathbb {Z}_{4}$ and $\Delta (G)$ is not triangle-free. Indeed, if $G/P = \langle xP \rangle$, then the three conjugacy classes $x^{G}$, $(x^{-1})^{G}$ and $(x^{2})^{G}$ induce a triangle. Therefore $T \cong \mathbb{Z}_{2}$ or $T \cong Q_{8}$.  This implies that $P$ is abelian.  When $T = Z_2 = \langle z \rangle$ and when $T = Q_8$, take $Z(Q_8) = \langle z \rangle$.  We know that $z$ has order $2$ and inverts every element of $P$.  Thus, if $x \in P$ has order $p^2$, then $\langle x \rangle$ has $(p^2-1)/2$ nontrivial orbits under $\langle z \rangle$.  Since $p^2-1 \ge 4$, this will yield a triangle in $\Delta (G)$ when $T = Z_2$.  Assume for now that $T = Z_2$.  We see that $P$ is elementary abelian.  Now, for $x \in P$ having order $p$, then $\langle x \rangle$ intersects $(p-1)/2$ conjugacy classes of $G$.  Hence, if $p \ge 7$, then $\langle x \rangle$ will intersect at $3$ conjugacy classes, and we have a triangle.  Thus, we conclude that $p = 3$ or $p = 5$.  We see that $G$ is a Frobenius group, either $(Z_3^a) \rtimes Z_2$ or $(Z_5^a) \rtimes Z_2$.   

Suppose now that $T = Q_8$.  We know that an irreducible module for $Q_8$ has the form $Z_p \times Z_p$.  Thus, $\Omega_1 (T) = (Z_p \times Z_p)^a$.  We see that $Q_8$ has $(p^2-1)/8$ orbits on $Z_p \times Z_p$.  When $p \equiv 1 ({\rm mod ~} 4)$, each of these orbits intersects nontrivially two of the subgroups of order $p$ in sets of size $4$.  If $p \ge 13$, then three orbits will intersect nontrivially the same subgroup of order $p$, and we will get a triangle, which is a contradiction.  Thus, when $p \equiv 1 ({\rm mod ~} 4)$, we have $p < 13$, which yields $p = 5$.  On the other hand,  When $p \equiv 3 ({\rm mod ~} 4)$, each of these orbits intersects nontrivially four of the subgroups of order $p$ in sets of size $2$.  If $p \ge 7$, then three orbits will intersect nontrivially the same subgroup of order $p$, and we will get a triangle, which is a contradiction.  Thus, when $p \equiv 3 ({\rm mod ~} 4)$, we have $p < 7$, which yields $p = 3$.  

We know by Lemma \ref{Group exponent}, that $P$ has exponent at most $p^2$.  We have two cases $p = 3$ and $p = 5$.  Suppose first $p = 3$ and $P$ has an element $x$ of order $9$.  We know that the nontrivial elements in $\Omega_1 (\langle x \rangle)$ form one orbit in $G$.  However, the remaining $6$ elements cannot form a single orbit since $P$ is abelian, so the orbits have sizes that divide $8$.  Thus, $\langle x \rangle$ intersects nontrivially at least three conjugacy classes of $G$, and $\Delta (G)$ would have a triangle which is a contradiction.  Similarly, when $p = 5$, we see that $P$ has an element of order $25$.  Again, the nontrivial elements in $\Omega_1 (\langle x \rangle)$ form one orbit in $G$.  However, the remaining $20$ elements cannot form a single orbit since $P$ is abelian, so the orbits have sizes that divide $8$.  Thus, $\langle x \rangle$ intersects nontrivially at least six conjugacy classes of $G$, and $\Delta (G)$ would have a triangle which is a contradiction.  Therefore, $P$ has exponent $p$, and we have the Frobenius groups: either $((Z_3 \times Z_3)^a) \rtimes Q_8$ or $((Z_5 \times Z_5)^a) \rtimes Q_8$.
	
Case 2. $G$ has no normal subgroups of odd order. Let $N = O_{2} (G)$, and let $P$ be a Sylow $p$-subgroup of $G$. Since $P$ acts fixed-point-freely 
on $N$, we may apply Theorem \cite[10.5.5]{djsr}, to see that $P$ is a cyclic group. Hence, $|G| = 2^{m} p^{k}$ where $k=1$ or $2$.  We know by Higman's classification of solvable groups where all elements have prime power order that $G$ is either a Frobenius group or a $2$-Frobenius group.  We have two cases:
	
(I) $G/N$ has odd order.  Notice that $G$ will be a Frobenius group in this case.   Let $g\in G$ be an element of order $p$.  If $p > 3$ or $p = 3$ and $k = 2$, then $\{ g^G, (g^2)^G, (g^3)^G \}$ induces a triangle in $\Delta (G)$, which is a contradiction.  Hence, we have that $p = 3$ and $k = 1$.  Suppose $x \in N$ has order $4$.  If $x$ and $x^{-1}$ are not conjugate in $G$, then $\{ x^G, (x^2)^G, (x^{-1})^G \}$ induces a triangle which is a contradiction.  Thus, we have $g \in G$ so that $x^g = x^{-1}$.  It follows that $(x^{-1})^g = (x^g)^{-1} = x$, and $2$ divides the order of $g$.  Let $g^*$ be the $2$-part of $g$ and observe that $x^{g^*} = x^{-1}$ and $g^* \in N$.  In particular, $x$ is conjugate to its inverse in $N$ as desired.  
	
(II) $G/N$ has even order.  In this case, we see that $G$ is a $2$-Frobenius group.  Since $G/N$ is a solvable $CIT$-group, by \cite[Theorem 1]{ms}, $G/N = K/N\rtimes H/N$ is a Frobenius group such that its Frobenius kernel $K/N$ is the abelian Sylow $p$-subgroup and a Frobenius complement $H/N$ is a Sylow $2$-subgroup of $G/N$.  It is known for $2$-Frobenius groups that $H/N$ and $K/N$ must be cyclic.  Notice that if $4$ divides $|H/N|$, then we can find $h \in H$ so that $h^G, (h^2)^G, (h^3)^G$ form a triangle in $\Delta (G)$ which is a contradiction.  Thus, $H/N \cong Z_2$.  It then follows that $|K/N| = p^k$ and if $\langle x \rangle$ is a Sylow $p$-subgroup of $G$, then $\langle x \rangle$ intersects $(p^k-1)/2$ conjugacy classes of $G$.  If either $p > 3$ or $p = 3$ and $k = 2$, then we obtain a triangle in $\Delta (G)$, a contradiction.  Therefore, we must have $|K/N| = 3$, and so, $G/N \cong S_3$.  By Theorem  1 (3) of \cite{lytkina}, we see that $N$ is an elementary abelian $2$-group.  We know that the exponent of a Sylow $2$-subgroup is $4$.  If we have an element of order $4$ in $G$ that is not conjugate to its inverse, then as we have seen before, we will have a triangle in $\Delta (G)$.

Conversely, suppose $G$ is a Frobenius group $(Z_3^a) \rtimes Z_2$ or $(Z_5^a) \rtimes Z_2$.  We see that every element of $G$ has order $2$ or order $p$ where $p$ is $3$ or $5$.  We see that a Frobenius complement for $G$ will fix every subgroup of order $p$ in the Frobenius kernel.  Thus, each subgroup of order $p$ will intersect two conjugacy classes of $G$.  Hence, we have no triangles in $\Delta (G)$.   

Next, suppose $G$ is a Frobenius group $N \rtimes Z_3$ where $N$ has exponent at most $4$ and every element of order $4$ is conjugate in $N$ to its inverse.  Now, every element of $G$ will have order $2$, $3$, or $4$.  Let $C_1, C_2, C_3$ be three distinct conjugacy classes in $G$, and let $x_i$ be a representative of $C_i$.  Suppose $C_i$ and $C_j$ are adjacent with $i \ne j$.  If $x_i$ has order $3$, then $x_j$ must have order $3$ also.  We see that $C_k$ cannot intersect nontrivially $\langle x_i \rangle$.  We see that $x_i$ and $x_j$ must have orders $2$ or $4$.  Obviously, they cannot both have order $2$, and since elements of order $4$ are conjugate to their inverse, they cannot both have order $4$ and intersect nontrivially the same subgroup of order $4$.  Thus, one must have order $2$ and the other have order $4$.  But it is not possible for $C_k$ to intersect nontrivially $\langle x_i, x_j \rangle$.  Thus, $\Delta (G)$ does not contain any triangles.

Now, suppose $G$ is a Frobenius group $((Z_3 \times Z_3)^a) \rtimes Q_8$ or $((Z_5 \times Z_5)^a) \rtimes Q_8$.  We see that every element of $G$ has order $2$, $4$, or $p$ where $p$ is $3$ or $5$.   We see that when $p = 3$ an subgroup of order $2$ in a Frobenius complement for $G$ will fix every subgroup of order $3$ in the Frobenius kernel and when $p = 5$ an subgroup of order $4$ in a Frobenius complement for $G$ will fix every subgroup of order $5$ in the Frobenius kernel.  Thus, each subgroup of order $p$ will intersect two conjugacy classes of $G$.   We know in $Q_8$ that every element of order $4$ is conjugate to its inverse.  Hence, we have no triangles in $\Delta (G)$.

Finally, suppose we have $2$-Frobenius group where $G$ is a $\{2, 3\}$-group, a Sylow $3$-subgroup has order $3$, a Sylow $2$-subgroup has exponent $4$ and every element of order $4$ is conjugate in $G$ to its inverse, and if $N$ is the Fitting subgroup of $G$, then $G/N \cong S_3$.  One can show that the nontrivial elements of $G$ have orders $2$, $3$, and $4$.  The fact that the elements of order $4$ are conjugate to their inverses imply that $\Delta (G)$ will have no triangles.     
%
%
\end{proof}

Notice that SmallGroup (192,1023) and SmallGroup (192,1025) are examples of Frobenius groups where the Frobenius kernels are semi-extraspecial groups of order $64$ and whose Frobenius complements have order $3$.  Thus, these groups satisfy conclusion (3) where the Frobenius kernel is nonabelian.  Also, SmallGroup (200,44) is an example of a group that satisfies conclusion (5).   The simplest example of a group satisfying conclusion (6) is $S_4$.  SmallGroup (384,20164) gives another example of a group satisfying conclusion (6).

We now consider the nonsolvable case.  We will see that this classification is based on the classification of nonsolvable groups whose elements have prime power order that is found in \cite{rb}.

\begin{theorem} \label{Non-soluble} 
Let $G$ be a centerless non-solvable group.  Then $\Delta (G)$ is triangle-free if and only if $G$ is isomorphic to one of the following groups:
\begin{enumerate}
\item $PSL(2,q)$ where $q \in \{4, 7, 9 \}$,
\item $PSL(3,4)$, or 
\item  $G/N \cong {\rm PSL} (2,4)$ and $N$ an elementary $2$-group that is isomorphic to copies of the natural module for $ {\rm PSL} (2,4)$.
\end{enumerate}  
\end{theorem}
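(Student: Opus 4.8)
The plan is to run the forward direction through the classification of nonsolvable groups all of whose elements have prime power order, and then to trim the resulting finite list using the sharper orbit condition in Lemma \ref{Element orders}; the converse is then a single uniform combinatorial argument about the cyclic subgroups that can occur.

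First I would note that, by Lemma \ref{Element orders}, every nontrivial element of $G$ has order a prime or the square of a prime, so $G$ is in particular a nonsolvable group in which every element has prime power order. Invoking \cite{rb}, this confines $G$ to a short explicit list: either $G$ is one of the simple groups $PSL(2,q)$ with $q \in \{4,7,8,9,17\}$, $PSL(3,4)$, $Sz(8)$, $Sz(32)$, or else $G$ has a nontrivial normal $p$-subgroup $N$ with $G/N$ one of these simple groups. Since $Z(G)=1$, in the extension case $N$ may be taken self-centralizing with no trivial composition factor.

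Next I would apply two filters. The coarse filter is the prime/prime-square restriction itself: $PSL(2,17)$ contains an element of order $8=2^3$, so it is excluded at once. The fine filter is the refinement in Lemma \ref{Element orders} that for every $1\ne x$ the quotient $N_G(\langle x\rangle)/C_G(\langle x\rangle)$ has at most two orbits on $\langle x\rangle\setminus\{1\}$, equivalently that each cyclic subgroup meets at most two conjugacy classes. For each remaining candidate I would choose a generator $x$ of a cyclic subgroup of each relevant order and read off $|N_G(\langle x\rangle)/C_G(\langle x\rangle)|$ from the known subgroup structure. This eliminates $PSL(2,8)$, where a cyclic subgroup of order $9$ has inverting normalizer quotient of order $2$ and hence meets four classes, and it eliminates $Sz(8)$ and $Sz(32)$, where the cyclic tori have normalizer quotient of order at most $4$ while the primes $7,13$ (respectively $31,41$) force more than two orbits. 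The same computations confirm that $PSL(2,4)$, $PSL(2,7)$, $PSL(2,9)$ and $PSL(3,4)$ all satisfy the two-orbit bound. Finally, analyzing $N$ shows that the only admissible extension is conclusion (3): for the odd-order elements of $G/N\cong PSL(2,4)$ to have prime power order, the module must be fixed-point-free for them, which together with self-centralizing forces $N$ to be a sum of natural $PSL(2,4)$-modules over $\mathbb{F}_2$, and any extension of $PSL(2,7)$, $PSL(2,9)$ or $PSL(3,4)$, or by an odd $p$-group, fails either to be of prime-power-order type or to satisfy the two-orbit bound.

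For the converse I would argue uniformly over the list. In each group the element orders lie in $\{2,3,4,5,7\}$, so every cyclic subgroup has order a prime or $4$. Two distinct classes of involutions can never be adjacent, since a cyclic group contains at most one involution; two distinct classes of elements of order $4$ can never be adjacent, since there are no elements of order $8$, so a cyclic group containing two elements of order $4$ has order $4$, its two such elements are mutually inverse, and hence conjugate because the order-$4$ elements are real; and a class of elements of odd prime order $p$ is adjacent only to classes of the same order, since a cyclic group containing an element of order $p$ together with an element of any other prime power order would contain an element whose order is not a prime power. Combining these observations with the two facts established in the forward direction—that each cyclic subgroup meets at most two classes and that there are at most two classes of elements of order $5$—no three classes can be pairwise adjacent, so $\Delta(G)$ is triangle-free. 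The main obstacle I anticipate is the case-by-case bookkeeping of the quotients $N_G(\langle x\rangle)/C_G(\langle x\rangle)$ and their orbit counts, especially for $PSL(3,4)$ (classes of orders $3,4,5,7$) and for the extension in (3), where I must verify that $N$ is exactly a sum of natural modules, that the extension is centerless, and that its elements of order $4$ are real; pinning down precisely which extensions occur in \cite{rb}, and ruling out all non-natural modules and odd-order kernels, is the most delicate step.
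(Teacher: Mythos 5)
Your overall route is essentially the paper's: both directions of the forward argument hinge on Brandl's classification \cite{rb} of nonsolvable groups all of whose elements have prime power order, followed by triangle-based pruning of the resulting list, and a direct check for the converse. The one genuinely different ingredient is your pruning mechanism: where the paper exhibits, in each bad group, a non-real element $x$ of order $p^2$ and the triangle $\{x^G,(x^{-1})^G,(x^p)^G\}$, you instead count orbits of $N_G(\langle x\rangle)/C_G(\langle x\rangle)$ on $\langle x\rangle\setminus\{1\}$ via Lemma \ref{Element orders}. Your version is in fact more robust: in ${\rm PSL}(2,8)$ every element \emph{is} real (the cyclic torus of order $9$ has dihedral normalizer of order $18$, which contains inversion), so the paper's stated reason does not literally apply there; your count --- a quotient of order $2$ acting on the eight nontrivial elements of a cyclic group of order $9$ gives three distinct classes of generators, hence three pairwise adjacent classes meeting one cyclic subgroup --- produces the triangle correctly. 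For ${\rm Sz}(8)$ and ${\rm Sz}(32)$ both mechanisms work.

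Two slips in your quotation of \cite{rb} would matter if the plan were executed as written. First, Brandl's case-(1) list also contains ${\rm M}_{10}$ (which is not simple, so it escapes your phrasing ``one of the simple groups''); it must be listed and eliminated by its elements of order $8$, exactly as you eliminate ${\rm PSL}(2,17)$ --- the paper does this explicitly. Second, in the extension case Brandl's theorem already restricts $G/N$ to ${\rm PSL}(2,4)$, ${\rm PSL}(2,8)$, ${\rm Sz}(8)$, ${\rm Sz}(32)$ with $N$ an elementary abelian $2$-group that is a direct sum of natural modules; the step you flag as ``the most delicate'' --- ruling out extensions of ${\rm PSL}(2,7)$, ${\rm PSL}(2,9)$, ${\rm PSL}(3,4)$, odd-order kernels, and non-natural modules --- is therefore already contained in the citation and need not be re-derived, which is how the paper proceeds. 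Finally, your converse is more detailed than the paper's one-line assertion, but it has two loose ends: to kill a monochromatic triangle of odd-order classes you need ``at most two classes'' of elements of order $3$ and $7$ as well, not only of order $5$ (this does hold in all surviving groups, including the case-(3) extensions, where fixed-point-freeness of the odd-order elements on the natural module keeps the class counts equal to those of ${\rm PSL}(2,4)$); and the reality of the order-$4$ classes in ${\rm PSL}(2,7)$, ${\rm PSL}(2,9)$, ${\rm PSL}(3,4)$ and in the extensions of (3) must actually be verified, as you yourself note. With the list corrected and these checks supplied, your argument closes along the same lines as the paper's, and at one point more carefully.
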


\begin{proof}
Since $Z (G) = 1$ and $\Delta (G)$ is triangle-free, in light of Lemma \ref{Element orders}, the orders of the nontrivial elements of $G$ are either primes or squares of primes.  Hence, by \cite{rb}, we have the following cases:

(1) $G$ is isomorphic to one of the groups ${\rm PSL} (2,q)$, $q\in \{4, 7, 8, 9, 17 \}$, ${\rm PSL} (3,4)$, ${\rm Sz} (8)$, ${\rm Sz} (32)$, or ${\rm M}_{10}$.

The groups ${\rm PSL} (2,17)$ and $M_{10}$ have elements of order $8$, so we see that $G \not\cong {\rm PSL} (2,17)$ or ${\rm M}_{10}$.  In addition, each of the groups ${\rm PSL} (2,8)$, ${\rm Sz} (8)$ and ${\rm Sz} (32)$ has an element $x$ of order $p^{2}$ such that $x^{G} \neq (x^{-1})^{G}$ so that their corresponding cyclic conjugacy classes graph contain a triangle, which is a contradiction.  Thus, $G$ is isomorphic to one of the groups ${\rm PSL} (2,4)$, ${\rm PSL} (2,7)$, ${\rm PSL} (2,9)$ and ${\rm PSL} (3,4)$.

(2) $G$ contains an elementary abelian normal $2$-subgroup $N$ such that $G/N$ is isomorphic to one of the groups ${\rm PSL} (2,4)$, ${\rm PSL} (2,8)$, ${\rm Sz} (8)$, ${\rm Sz} (32)$, and $N$ is a direct product of copies $Z_p \times Z_p$ that are isomorphic to the natural module for $G/N$.

Again, if $G/N$ is one of the groups ${\rm PSL} (2,8)$, ${\rm Sz} (8)$ and ${\rm Sz} (32)$, then $G/N$ has an element $x$ of order $p^{2}$ such that $x^{G} \neq (x^{-1})^{G}$.  Pulling back to elements of $G$,  we will find elements that form a triangle in $\Delta (G)$.  Thus, we are left with $G/N \cong {\rm PSL} (2,4)$ and $N$ an elementary $2$-group that is isomorphic to copies of the natural module for $ {\rm PSL} (2,4)$.

Conversely, it is not difficult to see that if $G$ is one of the groups in the conclusion, then $\Delta (G)$ is triangle-free.
\end{proof}





\end{document}